\title{Tameness Properties in Multiplicative Valued Difference Fields with Lift and Section}
\author{Christoph Kesting}
\date{\today}
\newtheorem{prop}{Proposition}[section]
\newtheorem{lem}[prop]{Lemma}
\newtheorem{cor}[prop]{Corollary}
\newtheorem{thm}[prop]{Theorem}
\newtheorem*{satz*}{Satz}
\theoremstyle{definition}
\newtheorem{defi}[prop]{Definition}
\theoremstyle{remark}
\newtheorem{rem}[prop]{Remark}
\newtheorem*{ack}{Acknowledgement}
\newtheorem{fact}[prop]{Fact}
\newcommand{\lan}{\mathcal{L}}
\newcommand{\lann}{\mathcal{L}}
\newcommand{\iso}{\cong}
\newcommand{\K}{\mathcal{K}}
\newcommand{\cM}{\mathcal{M}}
\newcommand{\cN}{\mathcal{N}}
\newcommand{\Kl}{{\mathcal{K}'}}
\newcommand{\A}{\mathcal{A}}
\newcommand{\zsig}{\mathbb{Z}\left[\sigma \right]}
\newcommand{\ntp}{\mathrm{NTP}}
\newcommand{\nip}{\mathrm{NIP}}
\newcommand{\NTP}{\ntp}
\newcommand{\NIP}{\nip}
\newcommand{\tp}{{\mathrm{TP}_2}}
\newcommand{\divh}{\mathrm{div}}
\newcommand{\VF}{\mathrm{VF}}
\newcommand{\ACVF}{\mathrm{ACVF}}
\newcommand{\MODAG}{\mathrm{MODAG}}
\newcommand{\characteristic}{\mathrm{char}}
\newcommand{\Lring}{\mathcal{L}_{ring}}
\newcommand{\mycommand}
\newcounter{undefinedreferences}
\begin{document}

\maketitle
\begin{abstract}
We prove relative quantifier elimination for Pal's multiplicative valued difference fields with an added lifting map of the residue field. Furthermore, we generalize a $\NIP$ transfer result for valued fields by Jahnke and Simon to $\NTP_2$ to show that said valued difference fields are $\NTP_2$ if and only if value group and residue field are.
\end{abstract}

The model theory of valued fields with a distinguished automorphism was first studied by Belair, Macintyre, and Scanlon by adding an 
isometric automorphism in  \cite{belair2007model} and introducing the $\sigma$-hensel scheme as an analogue to henselianity in valued fields. Further progress was made by Durhan (formerly Azgin) \cite{azgin2010valued} in the contractive case. Later  Durhan and van den Dries \cite{azgin2011elementary} expanded on the case of an isometric automorphism. In 2012 Pal \cite{pal2012multiplicative}
expanded the class of admissible maps to so-called multiplicative automorphisms.
Further, Chernikov and Hils showed a $\ntp_2$-transfer result in \cite{chernikov2014valued}. We expand upon that result by adapting a  generalized $\nip$-transfer result by Jahnke and Simon \cite{jahnke2016nip}, and applying it in Pal's setting.

Let $K$ be a field and $\Gamma$ an ordered abelian group. 
A map $v:K\to \Gamma \cup \{\infty\}$ is called a valuation if for all $x,y \in K$ we have that $v(x)= \infty \Leftrightarrow x =0$,  $v(xy)=v(x)+v(y)$ and $v(x +y) \geq \min\{v(x),v(y)\}$.
We denote by $\mathcal{O}=\{ x \in K | v(x)\geq 0\}$ the valuation ring,  $ \mathfrak{m}=\{ x \in K | v(x)>0\}$ its maximal ideal,  $ \pi : \mathcal{O} \to \mathcal{O}/ \mathfrak{m}$ be the quotient map and $k:=\mathcal{O}/\mathfrak{m}$ the residue field.
\begin{ack}
Research in this paper was conducted for the author's master thesis, and the author wants to thank his master thesis advisor Martin Hils for his guidance and feedback.
\end{ack}
\section{Lifting in Valued Fields}

\begin{defi}
Let $K$ be a valued field.
A lift of the residue field (if it exists) is a field embedding  $\iota: k \to K$ satisfying $\pi(\iota(x))=x$ for all $x \in k$. This directly implies, that $v(\iota(x))=0$ for all $x \in k^\times$ and that $\characteristic(K)=\characteristic(k)$.

If $ p(x)=\sum_i a_i x^i\in k[x]$ is a polynomial, we say that the lift of $p$ along $\iota$ is given by the lift of the coefficients $p^\iota (x)= \sum_i \iota(a_i) x^i \in \mathcal{O}[x].$ 
\end{defi}
It follows from Hensel's lemma that in perfect henselian valued fields of equicharacteristic $0$ or $p$, every partial lift of the residue field can be extended to a full lift $\iota$.

\begin{defi}
Let $K$ be a valued field. 
A section of the value group is a group homomorphism $s:\Gamma \to K^\times$ such that $v(s(\gamma))=\gamma$  for all $\gamma \in \Gamma$.
\end{defi}

\begin{fact}[\mbox{\cite[Chapter 2, Corollary 28]{cherlin1976model}}]
Every $\aleph_1$-saturated valued field does admit a section.
\end{fact}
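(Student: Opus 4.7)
The plan is a transfinite construction via Zorn's lemma on partial sections, with $\aleph_1$-saturation used to carry out the critical extension step.

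Consider the poset of pairs $(H, s_H)$ where $H \leq \Gamma$ is a subgroup and $s_H \colon H \to K^\times$ a group homomorphism satisfying $v \circ s_H = \mathrm{id}_H$, ordered by extension. Chains in this poset admit upper bounds via unions, so Zorn yields a maximal $(H, s)$, and it suffices to show $H = \Gamma$. Suppose for contradiction that $\gamma \in \Gamma \setminus H$. In the easy case $\mathbb{Z}\gamma \cap H = 0$, pick $a \in K^\times$ with $v(a) = \gamma$ using surjectivity of $v$ and set $s'(h+k\gamma) := s(h)\cdot a^k$; this is a well-defined extension, contradicting maximality. The substantive case is when there is a minimal $n > 0$ with $\delta := n\gamma \in H$; then extending $s$ requires an element $y \in K^\times$ with $y^n = s(\delta)$, after which $v(y) = \gamma$ is forced by torsion-freeness of $\Gamma$ (since $n\cdot v(y) = n\gamma$).

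The heart of the proof is producing this $y$ by realizing the $1$-type $p(y) := \{y^n = s(\delta)\}$ over the single parameter $s(\delta)$, and this is where the $\aleph_1$-saturation hypothesis is indispensable. The main obstacle is verifying finite satisfiability of $p$ in $K$, since $s(\delta)$ is not automatically an $n$-th power. Following the strategy of Cherlin \cite{cherlin1976model}, one addresses this by organizing the Zorn construction so that the values $s(\delta')$ at earlier steps are chosen to be sufficiently divisible within $K$, using $\aleph_1$-saturation to realize countable auxiliary partial types that specify coherent systems of roots compatible with $v$. Finite satisfiability of these auxiliary types follows from the fact that each finitely generated subgroup of $\Gamma$ is free abelian of finite rank, hence admits an obvious partial section into $K^\times$ realizing any finite fragment of the divisibility data, combined once more with surjectivity of $v$ on relevant fragments of $\Gamma$. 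The delicate point, and the main obstacle, is the bookkeeping needed to interleave the Zorn extension with the divisibility-enforcing saturation arguments so that the root extraction required at the critical step is always possible.
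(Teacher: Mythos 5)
The paper states this result as a Fact with a citation to Cherlin and supplies no proof of its own, so the comparison can only be against the correctness of your argument. The argument has a genuine gap at the very step you flag as ``the main obstacle,'' and the proposed fix cannot be made to work. Zorn's lemma applied to the poset of \emph{all} partial sections hands you some maximal $(H,s)$, but a maximal partial section need not be a full section: in the case $n\gamma = \delta \in H$ with $n > 1$ minimal, you need an $n$-th root of $s(\delta)$ in $K$, and since $\{y^n = s(\delta)\}$ is a single formula rather than a genuinely infinite finitely-satisfiable type, $\aleph_1$-saturation gives you nothing -- either $s(\delta)$ is already an $n$-th power in $K$ or it is not, and in general it will not be, because $s(\delta)$ was chosen earlier with no knowledge of this future divisibility demand. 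You cannot ``organize the Zorn construction'' to preempt this; Zorn's lemma gives you no control over which maximal element you receive, and a maximal $(H,s)$ with $H \subsetneq \Gamma$ is a genuine possibility in the poset you chose, so there is no contradiction.

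The standard repair takes one of two forms. Either one restricts the Zorn poset to partial sections $(H,s)$ with $H$ a \emph{pure} (relatively divisible) subgroup of $\Gamma$; then for $\gamma \notin H$ one passes directly to the purification $H'$ of $H + \mathbb{Z}\gamma$, whose quotient $H'/H$ is torsion-free of rank one and hence countable, and realizes a single countable type in countably many variables over countably many parameters asserting a coherent family of roots above $H$. Finite satisfiability now does hold: any finite fragment of the type lives inside a finitely generated $\Delta \leq H'$, $\Delta \cap H$ is pure in $\Delta$, and a pure subgroup of a finitely generated free abelian group is a direct summand, so the already-fixed $s|_{\Delta \cap H}$ extends to $\Delta$. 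This compatibility with the previously chosen $s$ is exactly what your ``free abelian of finite rank'' remark misses -- freeness alone does not guarantee that the obvious partial section agrees with $s$ on $\Delta \cap H$; you need the direct-sum decomposition, which needs purity. Alternatively, and more cleanly: since $\Gamma$ is torsion-free, $\mathcal{O}^\times$ is a pure subgroup of $K^\times$; since $\mathcal{O}^\times$ is definable in the $\aleph_1$-saturated structure $K$, it is $\aleph_1$-saturated as an abelian group and hence pure-injective (algebraically compact); therefore the pure exact sequence $1 \to \mathcal{O}^\times \to K^\times \to \Gamma \to 0$ splits, and a splitting over $\Gamma$ is precisely a section.
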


\begin{defi}
We will work in a 3-sorted language $\lann_{\Gamma,k}$ for valued fields containing the sorts $\VF$, $\Gamma$ and $k$ for the valued field, the value group and the residue field, respectively.
On the sorts $\VF$ and $k$ we have the language of rings $\lann_{Ring}$ and $\lann_{ring}$, respectively, consisting of $\{+,-,\cdot,0,1\}$ and on $\Gamma$ the language $\lann_{\Gamma}$ of ordered abelian groups $\lann_{oag}=\{+,0,<\}$ together with a constant $\infty$.
The language also contains maps for the valuation $v:\VF\to \Gamma$ and the 2 variable residue map $Res:VF\times VF \to k$ with $Res(a,b)=\pi(a/b)$ where $\pi:\VF  \to k$ is the usual residue map with $\pi(a)=0$ for $a \not \in \mathcal{O}$. Whenever we add a new function symbol $f$ to the language we will write this as $\lann_{\Gamma,k,f}$.

Often when $\K$ is a structure of a valued field, we denote the structure in the $\VF$ sort $\VF_\K$ or $K$, and the rest of the definitions above with the index $\K$, to form $\Gamma_\K$ and so on. Furthermore for a subfield $B$ of a valued field $K$ we denote $\pi(B)=:k_B$ and $v(B)=:\Gamma_B$.

Furthermore, we denote group extensions of $\Gamma$ by $\gamma$ as $\Gamma\langle \gamma \rangle $, field extensions of $K$ by an element $a$ as $K(a)$, and the extension of $\K$ as an $\lann$-structure for a language $\lann$ as $\K \langle a \rangle$.
\end{defi}

\begin{defi}\label{def ac map}
Let $\K$ be a valued field. A map $ac:K\to k_\K$ is called an angular component ($ac$-map) if $ac(x)=0 \Leftrightarrow x=0$,
 $ac|_{K^\times}:K^\times \to k^\times$ is a group homomorphism,
and  $ac(x)=\pi(x)$ for all $x \in \mathcal{O}_\K^\times$.
An $ac$-map is quantifier-free definable from a section. For $x \neq 0 $ it suffices to set  $ac(x)=Res(x,s(v(x))) $.
\end{defi} 

\begin{fact}[\cite{pas1989uniform}]
Let $T$ be the theory of henselian valued fields in equicharacteristic $0$. Then $T$ eliminates $\VF$-quantifiers in the language $\mathcal{L}_{\Gamma,k,ac}$. \label{pas theorem}
\end{fact}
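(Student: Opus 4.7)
The plan is to run the standard embedding test for quantifier elimination in the three-sorted setting. Given two henselian valued fields $\K, \cM \models T$ of equicharacteristic $0$ in $\mathcal{L}_{\Gamma,k,ac}$ with $\cM$ sufficiently saturated, and a common $\mathcal{L}_{\Gamma,k,ac}$-substructure $A$ embedded in both, I want to show that every $a \in K \setminus A$ can be matched with some $b \in M$ so that the natural map $A\langle a \rangle \to A\langle b \rangle$ is an $\mathcal{L}_{\Gamma,k,ac}$-isomorphism over $A$. Since $\Gamma$- and $k$-quantifiers are allowed in the eventual equivalent formula, only the quantifier-free VF-data of $a$ over $A$, together with the induced data in the residue field and value group, need to be preserved.

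The extension step splits into the four standard cases from the theory of valued field extensions. First, if $a$ lies in the henselization of $A$ as a simple root of some $p(x) \in \mathcal{O}_A[x]$, Hensel's lemma identifies $a$ uniquely from $\pi(a)$ and saturation yields the matching $b \in M$. Second, in the residue-transcendental case, $v(a) = 0$ and $\pi(a)$ is transcendental over $k_A$; we pick $b$ with $\pi(b)$ realizing the image of $\pi(a)$, and the $ac$-map extends by multiplicativity to all of $A(a)$. Third, in the value-transcendental case, $v(a) \notin \mathbb{Q} \otimes_{\mathbb{Z}} \Gamma_A$; we pick $b$ with the correct $v(b)$ and $ac(b)$, and the residue field of $A(a)$ equals $k_A$. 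Fourth, in the immediate transcendental case, $a$ is a pseudo-limit of a pseudo-Cauchy sequence $(a_i) \subset A$ without limit in $A$. Since henselian valued fields of equicharacteristic $0$ are algebraically maximal, $(a_i)$ must be of transcendental type, and Kaplansky's uniqueness theorem for immediate extensions together with saturation provides the required $b$.

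The main obstacle is the immediate case (iv): one must verify that the residue field, value group, and $ac$-map on $A\langle a \rangle$ are all determined by the pseudo-Cauchy data over $A$. The key observation is that the $ac$-map pins down the multiplicative leading-coefficient datum that would otherwise distinguish inequivalent immediate transcendental extensions; combined with equicharacteristic $0$ (which rules out Artin-Schreier pathologies), this makes the extension unique up to isomorphism over $A$. After checking that the four cases exhaust all quantifier-free VF-types of $a$ over $A$ and that each extension step respects $v$, $\pi$, $Res$ and $ac$, one invokes an iterated back-and-forth/extension argument over a well-ordering of $K$ to conclude elimination of $\VF$-quantifiers.
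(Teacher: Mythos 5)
This statement is quoted in the paper as a Fact, with a citation to \cite{pas1989uniform}, and the paper itself gives no proof; there is therefore nothing internal to compare your argument against. Judged on its own terms, your sketch follows the standard embedding-test proof of Pas's theorem and its main ingredients are the right ones: pass to the henselization, treat residue-transcendental, value-transcendental and immediate extensions separately, and invoke algebraic maximality (equivalent to henselianity in equicharacteristic~$0$) to reduce the immediate case to pc-sequences of transcendental type, which are rigid by Kaplansky's theorem.

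Two imprecisions are worth flagging. First, it is not true that each $a\in K\setminus A$ has quantifier-free $\VF$-type falling into one of your four pure cases; rather, one enlarges $A$ and its image step by step inside $K$ and $M$, at each stage adjoining an element of one of the pure types (and re-henselizing), so the case split applies to the \emph{steps} of a transfinite construction, not to the chosen $a$ itself. Second, the role you attribute to the $ac$-map in the immediate case is misplaced: when $A\langle a\rangle/A$ is immediate the $ac$-map on the extension is automatically determined from its restriction to $A$ via the pc-sequence (since $v$ and $\pi$ do not change and $ac$ is multiplicative), so it imposes no additional constraint there; what $ac$ actually rigidifies is the value-extension case, where, once $v(a)$ is fixed, the datum $ac(a)$ pins down the otherwise free multiplicative coefficient of $a$. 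Finally, your case~(iii) as phrased excludes the torsion situation $v(a)\in\divh(\Gamma_A)\setminus\Gamma_A$, which requires a separate purely-ramified step and is not covered by (i), (ii) or (iv) as you have stated them.
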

\begin{prop}
\label{section embedding for residue field} \label{embedding residue}
Let $\K$ and $\K'$ be valued fields with lift (or resp. with lift and section) and $\A$ be an $\lann_{\Gamma,k,\iota}$-substructure (or resp.  $\lann_{\Gamma,k,\iota,s}$-substructure) of $\K$ and $\K'$.
Let $\alpha \in k_\K \setminus k_\A$. If there is $\beta \in k_\Kl$ such that there is an $\mathcal{L}_{ring}$-isomorphism $k_\A( \alpha ) \to k_\A ( \beta )$  sending $\alpha \mapsto \beta$ then this extends (uniquely) to an isomorphism of valued fields with lift (resp. with lift and section) $\A \langle \iota (\alpha) \rangle \to \A \langle \iota (\beta ) \rangle $ over $\A$. 
Moreover, in both caseswe have $\Gamma_{\A \langle \iota (\alpha ) \rangle}=\Gamma_\A$.
\begin{proof}
If $\alpha$ is algebraic over $k_\A$ with minimal polynomial $p(x)\in k_\A[x]$ then $\iota(\alpha)$ is also algebraic over $A=\VF_\A $ with minimal polynomial $p^\iota(x)$ by the fundamental inequality (\cite[Theorem 3.4.2] {engler2005valued}). The same holds for $\beta$ and $\iota(\beta)$. By the fundamental inequality, this extension is already unique and $\Gamma_{\A \langle \iota (\alpha ) \rangle}=\Gamma_\A$. 

If $\alpha$ is transcendental over $k_\A$, so is $\beta$. Furthermore both $\iota(\alpha)$ and $\iota(\beta)$ are transcendental over $A$ and we have Gauss extensions, which by \cite[Corollary 2.2.2] {engler2005valued} are both uniquely determined, isomorphic and preserve the value group $\Gamma_{\A \langle \iota (\alpha ) \rangle}=\Gamma_\A$.

If a section exists, it is preserved as the value group $\Gamma_\A$ remains unchanged.
\end{proof}
\end{prop}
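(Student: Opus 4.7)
The plan is to split the argument according to whether $\alpha$ is algebraic or transcendental over $k_\A$. In both cases the extension in the $\VF$-sort will be generated by the single element $\iota(\alpha)$, and the desired isomorphism will be the one sending $\iota(\alpha) \mapsto \iota(\beta)$ over $\A$. The extended lift on $k_\A(\alpha)$ is then forced: it must send $\alpha \mapsto \iota(\alpha)$, and similarly for $\beta$.

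For the algebraic case, let $p(x) \in k_\A[x]$ be the minimal polynomial of $\alpha$. Since $\iota$ is a field embedding, $\iota(\alpha)$ is a root of $p^\iota(x) \in \mathcal{O}_\A[x]$ and hence algebraic over $A$ of degree at most $\deg p$. On the other hand, the residue field of the valued field $A(\iota(\alpha))$ contains $\pi(\iota(\alpha)) = \alpha$ and thus all of $k_\A(\alpha)$, which already has degree $\deg p$ over $k_\A$. The fundamental inequality \cite[Theorem 3.4.2]{engler2005valued} then forces equality throughout: $[A(\iota(\alpha)):A] = \deg p = [k_{\A\langle\iota(\alpha)\rangle}:k_\A]$, the extension of the valuation is unique, and $\Gamma_{\A\langle\iota(\alpha)\rangle} = \Gamma_\A$. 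The same analysis applied on the $\Kl$-side to $\beta$, together with the given ring isomorphism $k_\A(\alpha) \to k_\A(\beta)$, glues into a valued-field isomorphism $\A\langle\iota(\alpha)\rangle \to \A\langle\iota(\beta)\rangle$ which by construction respects the extended lifts.

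In the transcendental case $\beta$ is also transcendental over $k_\A$, and both $\iota(\alpha)$ and $\iota(\beta)$ are transcendental over $A$ (any integral polynomial relation would reduce modulo $\mathfrak{m}_\A$ to a relation for $\alpha$ or $\beta$ over $k_\A$, which is impossible). The valuations on $A(\iota(\alpha))$ and $A(\iota(\beta))$ are then the Gauss extensions, which by \cite[Corollary 2.2.2]{engler2005valued} are uniquely determined, preserve $\Gamma_\A$, and have residue fields $k_\A(\alpha)$ and $k_\A(\beta)$ respectively; the map $\iota(\alpha) \mapsto \iota(\beta)$ therefore extends uniquely to a valued-field isomorphism over $\A$ compatible with the lifts. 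Since $\Gamma_\A$ is preserved in both cases, any section $s$ on $\A$ already serves as a section on the extension, so the isomorphism automatically commutes with $s$.

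The main delicate point is the algebraic case: one must recognize that the residue field of the extended valuation genuinely contains $k_\A(\alpha)$ as a literal subfield, not merely an isomorphic copy, so that the residue-field degree estimate in the fundamental inequality can be applied. This is precisely where being a lift (i.e.\ a section of $\pi$) rather than just a field embedding matters, since it guarantees $\pi(\iota(\alpha)) = \alpha$. Once this is in place, everything else is routine bookkeeping.
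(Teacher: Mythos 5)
Your proof is correct and follows essentially the same route as the paper: the algebraic/transcendental split, the fundamental inequality in the algebraic case, and Gauss extensions in the transcendental case, with the section preserved because $\Gamma_\A$ is unchanged. You supply more of the mechanism (in particular the degree-squeeze argument showing $p^\iota$ is the minimal polynomial, which the paper compresses into a bare appeal to the fundamental inequality), but the underlying argument is the same.
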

The following is a well-known result about valued fields with a lift. We give the proof for the convenience of the reader.
\begin{prop}
Let $T$ be the theory of henselian valued fields in equicharacteristic $0$ with a lift $\iota$ of the residue field. Then $T$ eliminates $\VF$-quantifiers in the language $\mathcal{L}_{\Gamma,k,ac,\iota}$.  \label{hensel 00 qe}
\begin{proof}
Let $\mathcal{K}=(K,\Gamma_\K, k_\K)\models T$ be countable, $\K'=(K', \Gamma_\Kl, k_\Kl)\models T$ $\aleph_1$-saturated, and $\mathcal{A}=(\VF_\A ,\Gamma_\A,k_\A)\subseteq \K$ a substructure.
Furthermore let $f=(f_{\VF},f_{\Gamma}, f_k):\mathcal{A}\to \K'$ be an $\mathcal{L}_{\Gamma,k,ac,\iota}$-embedding, such that $f_\Gamma$ and $f_k$ are  $\mathcal{L}_\Gamma$- and $\mathcal{L}_{ring}$-elementary, respectively.
We need to show that we can extend $f$ to an $\mathcal{L}_{\Gamma,k,ac,\iota}$-embedding $\tilde  f:\mathcal{K}\to \K'$.
\begin{enumerate}
\item[Step 1:] We may assume that $\Gamma_\A=\Gamma_\K$, because there are no functions leaving $\Gamma$.

\item[Step 2:] We may also assume that $\VF_\A$ and $k_\A$ are subfields of $K$ and $k_\K$ respectively by passing to quotient fields.
Indeed, we can extend $f_{\VF}$ to a unique $\mathcal{L}_{Ring}$-embedding $ \tilde f_{\VF}:Q(\VF_\A)\to K'$ and
$f_k$ to a unique $\mathcal{L}_{ring}$-embedding $\tilde  f_{k}:k_{Q(\VF_\A)}\to k_\Kl$. 
The resulting map is a morphism on $k$ and $\Gamma$, because $v(a/b)=v(a)-v(b)$, $ac(a/b)=ac(a)/ac(b)$ and $\iota(\alpha/\beta)=\iota(\alpha)/\iota(\beta)$.

\item[Step 3:] 
We can extend $f$, such that $k_{\VF_\A}=k_\K$ (which implies $k_\A=k_\K$). 
Let $\alpha \in k_\K\setminus k_{\VF_\A}$, and take a $\beta\in k_\K'\setminus k_{f(\VF_\A)}$ satisfying the $\Lring$-type of $\alpha$ under $f$. Then applying Proposition \ref{embedding residue} yields a suitable extension of $f$.
\end{enumerate} 
From this point on we can conclude with Fact \ref{pas theorem}, as we are already lifting the entire residue field. So any extension of $f$ to $\K$ respects $\iota$.
\end{proof}
\end{prop}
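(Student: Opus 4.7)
The plan is a standard extension-of-embeddings argument that reduces to Fact \ref{pas theorem} as a black box once the residue field has been fully absorbed into the base. I would take $\K\models T$ countable, $\Kl\models T$ be $\aleph_1$-saturated, $\A\subseteq \K$ a substructure, and $f=(f_\VF,f_\Gamma,f_k):\A\to \Kl$ an $\lann_{\Gamma,k,ac,\iota}$-embedding with $f_\Gamma$ and $f_k$ elementary in their respective sorts. The goal is to extend $f$ to an $\lann_{\Gamma,k,ac,\iota}$-embedding of all of $\K$.

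The initial reductions are routine: since nothing maps out of $\Gamma$ we may harmlessly assume $\Gamma_\A=\Gamma_\K$, and by closing under field inverses (an operation compatible with $v$, $ac$ and $\iota$) we may assume $\VF_\A$ and $k_\A$ are fields. The core step is then to arrange $k_\A=k_\K$. For each $\alpha\in k_\K\setminus k_\A$ I would pick $\beta\in k_\Kl$ realizing $f_k(\mathrm{tp}^{\Lring}(\alpha/k_\A))$, using elementarity of $f_k$ together with $\aleph_1$-saturation of $\Kl$, and then apply Proposition \ref{embedding residue} to extend $f$ to $\A\langle\iota(\alpha)\rangle$. That proposition delivers an isomorphism of valued fields with lift while keeping the value group $\Gamma_\A$ fixed, and because $\Gamma_\A$ is unchanged the $ac$-map extends uniquely from $\A$ to $\A\langle\iota(\alpha)\rangle$: any nonzero $x$ in the extension can be written $a\cdot u$ with $a\in \A^\times$ of the same valuation and $u$ a unit of the valuation ring, which forces $ac(x)=ac(a)\cdot\pi(u)$. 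Iterating exhausts the countable $k_\K$.

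Once $k_\A=k_\K$ has been arranged, the lifting map $\iota$ has no further images to produce: every value $\iota(\bar a)$ with $\bar a\in k_\K$ already lies in $\VF_\A$, so any $\lann_{\Gamma,k,ac}$-embedding extending $f$ is automatically an $\lann_{\Gamma,k,ac,\iota}$-embedding. I would then finish by appealing directly to Fact \ref{pas theorem}, which extends $f$ to an $\lann_{\Gamma,k,ac}$-embedding $\tilde f:\K\to\Kl$ and hence to the desired $\lann_{\Gamma,k,ac,\iota}$-embedding.

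I expect the main obstacle to be the residue-field step: adjoining $\iota(\alpha)$ must not disturb the value group, the $ac$-map on the already-fixed valuations, or the portion of $\iota$ already settled on $k_\A$. Proposition \ref{embedding residue} is precisely the piece that handles this -- the algebraic case via the fundamental inequality and the transcendental case via the Gauss extension, both preserving $\Gamma_\A$ -- so the difficulty is isolated to, and resolved by, that proposition. The remainder is a clean reduction to Pas's theorem.
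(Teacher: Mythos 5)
Your proof matches the paper's argument step for step: reduce to $\Gamma_\A=\Gamma_\K$, pass to quotient fields so $\VF_\A$ and $k_\A$ are fields, absorb the residue field into the base via Proposition \ref{embedding residue} together with elementarity of $f_k$ and $\aleph_1$-saturation of $\K'$, and conclude with Fact \ref{pas theorem}. Your explicit check that the $ac$-map extends uniquely and compatibly once the value group is held fixed is a detail the paper leaves implicit when it invokes Proposition \ref{embedding residue} (which is stated for $\lann_{\Gamma,k,\iota}$ rather than $\lann_{\Gamma,k,ac,\iota}$), but this is the same route, not a different one.
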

\section{Quantifier Elimination for Valued Fields with Lift and Section}
Now we extend the previous result for the lift to include the section as well. A similar treatment of this case happens in the proof of \cite[Theorem 5.1]{vandenDries2014}, without giving the relative quantifier elimination with lift and section. For the benefit of the reader, we give our proof here and thus we need to be able to extend the value group. 
\begin{prop}\label{section qe fix}
\label{section embedding for valuegroup} \label{embedding value}
Let $\K$ and $\K'$ be valued fields with lift and section. Let $\A$ be a $\mathcal{L}_{\Gamma,k,\iota,s}$-substructure of $\K$ and $\K'$. Let $\gamma \in \Gamma_\K \setminus \Gamma_{\A}$.  If there is $\gamma' \in \Gamma_\Kl$ such that $\Gamma_\A \langle \gamma \rangle \to \Gamma_\A \langle \gamma ' \rangle$ is an ordered group isomorphism,
then there is an isomorphism $\A\langle s(\gamma)\rangle\to \A \langle s (\gamma')\rangle$ of valued fields with lift and section over $\A$ sending $s(\gamma)$ to $s(\gamma')$. Moreover, we have $k_{\A\langle s(\gamma) \rangle}=k_\A$.
\begin{proof}
To show this we verify that the extension $\A\langle s(\gamma)\rangle$ is unique up to isomorphism.

If $\gamma \in \divh(\Gamma_\A)$ then there is a minimal $n\in \mathbb{N}$ such that $n \gamma \in \Gamma_\A$. Then $s(\gamma)$ is also an $n$th root of $s(n\gamma)$ in $A$. By the fundamental inequality, the extension is already unique up to isomorphism and the minimal polynomial is given by $x^n -s(n\gamma)$. The same holds for $\gamma'$, so the extensions $\A\langle s( \gamma) \rangle$ and $\A \langle s(\gamma')\rangle $ have to be isomorphic. 

If $\gamma \not \in \divh(\Gamma_\A)$ then $s(\gamma)$ is transcendental over $A$.  This already determines the valued field extension uniquely by \cite[Theorem 3.4.2]{engler2005valued}.  The same is again true for $s(\gamma')$, yielding the isomorphism.

By the fundamental inequality for $\gamma \in \divh(\Gamma_\A)$ and \cite[Theorem 3.4.2]{engler2005valued} for $\gamma \not \in \divh(\Gamma_\A)$ we can verify that $k_{\A\langle s(\gamma) \rangle}=k_\A$.
\end{proof}
\end{prop}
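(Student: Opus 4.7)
The plan is to reduce the proposition to a uniqueness-of-extension statement for the valued field with lift and section: if I can show that $\A\langle s(\gamma)\rangle$ is determined up to isomorphism over $\A$ by the order-isomorphism type of $\Gamma_\A\langle \gamma\rangle$ over $\Gamma_\A$, then running the same construction over $\K'$ with $\gamma'$ yields an isomorphism $\A\langle s(\gamma)\rangle \to \A\langle s(\gamma')\rangle$ over $\A$. The argument naturally splits according to whether $\gamma$ lies in the divisible hull $\divh(\Gamma_\A)$.

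First I would treat the algebraic case $\gamma\in \divh(\Gamma_\A)$. Pick the minimal $n$ with $n\gamma \in \Gamma_\A$; then $s(\gamma)$ is a root of $x^n-s(n\gamma)\in A[x]$, so it is algebraic over $A$ with $[A(s(\gamma)):A]\leq n$. The fundamental inequality \cite[Theorem 3.4.2]{engler2005valued} gives $[A(s(\gamma)):A]\geq e\cdot f$; since the value $\gamma$ already forces the ramification index to be at least $n$, both inequalities become equalities, the minimal polynomial of $s(\gamma)$ must be $x^n-s(n\gamma)$, and the residue extension is trivial. The same analysis applied to $\gamma'$ produces an isomorphic extension. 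Second, for the transcendental case $\gamma\notin \divh(\Gamma_\A)$, no integer multiple of $\gamma$ lands in $\Gamma_\A$, so $s(\gamma)$ is transcendental over $A$ and the valued subfield it generates is a Gauss-type extension; again by \cite[Theorem 3.4.2]{engler2005valued} this extension is uniquely determined by the value $\gamma$, isomorphic to the one produced by $\gamma'$, and has trivial residue extension.

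The main subtlety I expect is coherently transporting the extra structure. Because $k_{\A\langle s(\gamma)\rangle}=k_\A$ in both cases, the lift $\iota$ is automatically preserved, as there is no new residue field element to lift. For the section, I would define $\tilde s$ on $\Gamma_\A\langle \gamma\rangle$ by $\tilde s(\delta + k\gamma):= s(\delta)\cdot s(\gamma)^k$ and verify well-definedness: in the divisible subcase, the only relation that could cause an ambiguity is $n\gamma\in \Gamma_\A$, and the identity $s(\gamma)^n=s(n\gamma)$ is exactly the one already used to determine the minimal polynomial, so the section is well-defined and multiplicative. This is the only non-routine point; once it is checked, uniqueness of the algebraic/transcendental extension immediately upgrades the field isomorphism to an isomorphism of valued fields with lift and section over $\A$.
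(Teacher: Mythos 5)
Your proposal matches the paper's proof in structure and substance: the same case split on whether $\gamma$ lies in $\divh(\Gamma_\A)$, the same use of the fundamental inequality to pin down the minimal polynomial $x^n - s(n\gamma)$ and the triviality of the residue extension in the algebraic case, and the same transcendence argument for $\gamma\notin\divh(\Gamma_\A)$. Your extra paragraph explicitly checking that the section extends coherently is a worthwhile elaboration of a point the paper leaves implicit, but it does not change the argument.
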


\begin{thm}\label{qe for lift section pas}
The theory of henselian valued fields in equicharacteristic 0 with lift and section eliminates $VF$-quantifiers in $\lann_{\Gamma,k,\iota,s}$.
\begin{proof}
Apply Proposition \ref{section qe fix} in Step 1 and conclude as in Proposition \ref{hensel 00 qe}.
\end{proof}
\end{thm}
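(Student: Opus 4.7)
The plan is to imitate the back-and-forth argument of Proposition \ref{hensel 00 qe}, with the single modification that Step~1 is no longer a triviality in the presence of a section. I would take $\K\models T$ countable, $\K'\models T$ of cardinality $\aleph_1$-saturated, a substructure $\A\subseteq\K$, and an $\lann_{\Gamma,k,\iota,s}$-embedding $f=(f_{\VF},f_\Gamma,f_k):\A\to\K'$ with $f_\Gamma$ $\lann_\Gamma$-elementary and $f_k$ $\lann_{ring}$-elementary, and show that $f$ extends to an embedding $\tilde f:\K\to\K'$.

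The new content is Step~1: extend $f$ so that $\Gamma_\A=\Gamma_\K$. Because the section $s$ ties $\Gamma$ back into $\VF$, adjoining a value-group element now forces a nontrivial $\VF$-extension, so I cannot simply invoke ``there are no functions leaving $\Gamma$'' as in Proposition \ref{hensel 00 qe}. Instead, enumerate $\Gamma_\K\setminus\Gamma_\A$ (possible since $\K$ is countable). For each $\gamma$ in the enumeration, use $\aleph_1$-saturation of $\K'$ together with the $\lann_\Gamma$-elementarity of $f_\Gamma$ to pick $\gamma'\in\Gamma_{\K'}$ realizing the $\lann_\Gamma$-type of $\gamma$ over $f(\Gamma_\A)$, which in particular gives the desired ordered group isomorphism $\Gamma_\A\langle\gamma\rangle\to f(\Gamma_\A)\langle\gamma'\rangle$. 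Proposition \ref{section qe fix} then extends $f$ to an $\lann_{\Gamma,k,\iota,s}$-isomorphism $\A\langle s(\gamma)\rangle\to f(\A)\langle s(\gamma')\rangle$. Iterating through the enumeration exhausts $\Gamma_\K$.

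Steps~2 and 3 go through verbatim as in Proposition \ref{hensel 00 qe}: pass to fraction fields on both sorts $\VF$ and $k$, then absorb all of $k_\K$ into $k_\A$ using Proposition \ref{embedding residue}. Critically, Step~3 leaves $\Gamma_\A$ untouched, so the section is undisturbed; and the moreover clause of Proposition \ref{section qe fix} ensures that Step~1 leaves $k_\A$ untouched, so $f_k$ remains $\lann_{ring}$-elementary when Step~3 is invoked. Once $\Gamma_\A=\Gamma_\K$ and $k_\A=k_\K$, I finish by Fact \ref{pas theorem}: we now merely need a pure $\VF$-extension in $\lann_{\Gamma,k,ac}$, and since the whole residue field has been lifted and the whole value group sectioned, any such extension automatically respects both $\iota$ and $s$.

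The only delicate point is the new Step~1. We need Proposition \ref{section qe fix} and cannot just naively adjoin $s(\gamma)$, because in the divisible case $s(\gamma)$ is algebraic (a root of $x^n-s(n\gamma)$) and in the transcendental case it gives a Gauss extension; in both cases the extension is \emph{unique} as a valued field with lift and section only by invoking the fundamental inequality or \cite[Corollary 2.2.2]{engler2005valued} respectively. This uniqueness, together with the preservation $k_{\A\langle s(\gamma)\rangle}=k_\A$, is what keeps the interleaved induction on $\Gamma$- and $k$-extensions well-founded, and is precisely why Proposition \ref{section qe fix} was isolated.
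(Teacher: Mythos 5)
Your proposal is correct and follows exactly the paper's intended route: replace the (now non-trivial) Step~1 of Proposition~\ref{hensel 00 qe} by an iterated application of Proposition~\ref{section qe fix}, observing that its moreover clause keeps $k_\A$ fixed so that the subsequent residue-field step via Proposition~\ref{embedding residue} is undisturbed, and then conclude with Fact~\ref{pas theorem} since $\iota$ and $s$ are determined once $k$ and $\Gamma$ are fully absorbed. The only cosmetic remark is that one would normally pass to fraction fields \emph{before} invoking Proposition~\ref{section qe fix} (whose proof works with field-theoretic notions such as minimal polynomials and Gauss extensions), but this is a bookkeeping reordering, not a gap.
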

\begin{rem}
A version of this proof in positive characteristic also works in algebraically maximal Kaplansky fields, see \cite{MR1703196}.
\end{rem}

\begin{fact} \label{immediate is NIP section}

Note that for immediate extension $M(b)/M$ of a model of $T$ the type $tp_{\lann_{\Gamma, k,\iota,s}}(b/M)$ is already fully implied by $qftp_{\lan_{\Gamma,k}}(b/A)$, which consists of instances of $\NIP$ formulas as $\ACVF$ is $\NIP$. 
\end{fact}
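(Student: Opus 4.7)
The plan is to combine the relative quantifier elimination from Theorem \ref{qe for lift section pas} with the observation that the immediacy hypothesis renders the extra function symbols $\iota$ and $s$ harmless. By Theorem \ref{qe for lift section pas}, $T$ eliminates $\VF$-quantifiers in $\lann_{\Gamma,k,\iota,s}$, so every $\lann_{\Gamma,k,\iota,s}$-formula in $tp(b/M)$ is, modulo $T$, equivalent to a $\VF$-quantifier-free formula in the same language. It therefore suffices to show that any atomic $\lann_{\Gamma,k,\iota,s}$-formula over $M$ satisfied by $b$ is already determined by $qftp_{\lann_{\Gamma,k}}(b/M)$.

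To this end, I would fix such an atomic formula $\varphi(x)$ over $M$, with $x$ of $\VF$-sort. The only symbols beyond $\lann_{\Gamma,k}$ are $\iota:k\to\VF$ and $s:\Gamma\to\VF$. Any subterm of the form $\iota(\tau(x))$ has $\tau$ of $k$-sort, so $\tau(b)\in k_{M\langle b\rangle}=k_M$ by immediacy; likewise $\sigma(b)\in\Gamma_{M\langle b\rangle}=\Gamma_M$ for each $\Gamma$-sort subterm $\sigma$. The values $\tau(b)$ and $\sigma(b)$ are therefore computed from applications of $v(\cdot)$ and $Res(\cdot,\cdot)$ to polynomial expressions in $b$ with parameters from $M$, and so are determined by $qftp_{\lann_{\Gamma,k}}(b/M)$. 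Their images $\iota(\tau(b))$ and $s(\sigma(b))$ lie in $M$ and can thus be replaced by $M$-parameters, rewriting $\varphi(b)$ as an atomic $\lann_{\Gamma,k}$-formula over $M$. Consequently, $tp_{\lann_{\Gamma,k,\iota,s}}(b/M)$ is fully determined by $qftp_{\lann_{\Gamma,k}}(b/M)$.

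Finally, formulas in $\lann_{\Gamma,k}$ belong to the pure language of valued fields, and every model of $T$ embeds as an $\lann_{\Gamma,k}$-structure into an $\ACVF$-model. Since $\ACVF$ is $\NIP$, each such formula is $\NIP$, hence so are all instances in $qftp_{\lann_{\Gamma,k}}(b/M)$. The main obstacle is the middle step --- verifying, by induction on term complexity, that each $\iota$- or $s$-subterm applied to $b$ collapses to an element of $M$ --- but this reduces entirely to immediacy, which guarantees that all intermediate $k$- and $\Gamma$-values remain in $k_M$ and $\Gamma_M$ respectively.
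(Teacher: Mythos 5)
The paper actually states this as a ``Fact'' with no explicit proof, so you are filling in a gap. Your overall strategy --- relative $\VF$-quantifier elimination plus the observation that immediacy collapses all $\iota$- and $s$-subterms of $b$ into $M$, then closure with the $\NIP$-ness of quantifier-free $\lann_{\Gamma,k}$-formulas via $\ACVF$ --- is the right one, and it mirrors the proof the paper gives for the analogous proposition in the $\sigma$-henselian setting. The collapsing argument is sound: immediacy gives $k_{M(b)} = k_M$ and $\Gamma_{M(b)} = \Gamma_M$, so $M(b)$ is closed under $\iota$ and $s$, and by induction on $\iota,s$-nesting depth every such subterm applied to $b$ lands in $M$ and is determined by $qftp_{\lann_{\Gamma,k}}(b/M)$. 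The $\NIP$ part is also correct: a quantifier-free $\lann_{\Gamma,k}$-formula that had IP in some model of $T$ would keep its shattering witness under the $\lann_{\Gamma,k}$-embedding into a model of $\ACVF$, contradicting $\ACVF$ being $\NIP$.

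There is one genuine gap, at the step ``it therefore suffices to show that any \emph{atomic} $\lann_{\Gamma,k,\iota,s}$-formula over $M$ satisfied by $b$ is determined.'' Theorem \ref{qe for lift section pas} eliminates only $\VF$-quantifiers; the resulting formulas still carry quantifiers over the $\Gamma$- and $k$-sorts, and the quantified $\Gamma$- or $k$-variables can re-enter the $\VF$-sort through $s$ and $\iota$ and interact with $b$ (e.g.\ $\exists\gamma\, v(x - s(\gamma)) > \gamma$), so such formulas are not boolean combinations of atomic ones. Determining atomic formulas alone does not a priori determine these. What you actually need is the embedding lemma underlying the proof of Theorem \ref{qe for lift section pas}: if two realizations $b, b'$ have the same $qftp_{\lann_{\Gamma,k}}(\cdot/M)$, your collapsing argument gives an $\lann_{\Gamma,k,\iota,s}$-isomorphism $M\langle b\rangle \to M\langle b'\rangle$ over $M$ sending $b\mapsto b'$; since both generated structures have $\Gamma$- and $k$-parts equal to $\Gamma_M$ and $k_M$, this isomorphism restricts to the identity there and is therefore automatically $\lann_\Gamma$- and $\lann_{ring}$-elementary, so the back-and-forth in the QE proof shows $tp(b/M) = tp(b'/M)$. (Equivalently, one can invoke stable embeddedness and orthogonality of $\Gamma$ and $k$ as in Proposition \ref{stab embedded}, though note that proposition is stated \emph{after} the present Fact.) With this replacement your argument goes through.
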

Remarkably we maintain stable embeddedness in both cases.

\begin{prop}
In a model of T, the residue field and the value group are purely stably embedded and orthogonal in $\mathcal{L}_{\Gamma,k,\iota,s}$. \label{stab embedded}
 \begin{proof}

 Let $\mathcal{M} \prec \mathcal{N}$ be models of $T$ and $\boldsymbol \gamma \in \Gamma_\cN^n$ and $\boldsymbol \delta \in \Gamma_\cN^n $ tuples with $tp_{\lann_{\Gamma}}(\boldsymbol \gamma/\Gamma_\cM)= tp_{\lann_{\Gamma}} ( \boldsymbol \delta/\Gamma_\cM)$. Then we have  $\langle M(s(\boldsymbol \gamma)), \Gamma_\cM \langle \boldsymbol \gamma \rangle, k_\cM\rangle \iso \langle N(s(\boldsymbol \delta)), \Gamma_\cM \langle \boldsymbol \delta \rangle , k_\cM\rangle$ by iterating Proposition \ref{section embedding for residue field}.
 
Similarly, for $\boldsymbol \alpha  \in k_\cN^n$ and $\boldsymbol \beta \in k_\cN^n $ tuples with $tp_{\lann_{k}}(\boldsymbol \alpha/k_\cM)= tp_{\lann_{k}} ( \boldsymbol \beta/k_\cM)$ iterating Proposition \ref{section embedding for valuegroup} yields $\langle M(\iota(\boldsymbol \alpha)), \Gamma_\cM , k_\cM (\boldsymbol \alpha )\rangle \iso \langle N(\iota(\boldsymbol \beta)), \Gamma_\cM  , k_\cM( \boldsymbol \beta )\rangle$. This yields stable embeddedness. 

For purity consider $\boldsymbol \gamma, \boldsymbol \delta \in \Gamma^n$ with $\boldsymbol\gamma \equiv_{\lann_\Gamma} \boldsymbol\delta$. Let $\mathbb{Q}$ be the prime subfield of $K$. 
Now consider the generated $\lann_{\Gamma,k,\iota,s}$-structures $\langle \mathbb{Q}(s(\boldsymbol\gamma)), \mathbb{Z} \cdot \boldsymbol\gamma,\mathbb{Q} \rangle $ and $\langle \mathbb{Q}(s(\boldsymbol\delta)),\mathbb{Z} \cdot \boldsymbol\delta, \mathbb{Q}\rangle $. 
Then by \ref{section embedding for valuegroup} the isomorphism $ \mathbb{Z} \cdot\boldsymbol \gamma \to  \mathbb{Z} \cdot \boldsymbol\delta$ sending $\boldsymbol\gamma$ to $\boldsymbol\delta$ extends to one of the whole structure.
 By $\VF$-quantifier elimination it is already elementary. The argument for $k$ follows analogously from Proposition \ref{section embedding for residue field}. The same proof also shows that $k \cup \Gamma$ is stably embedded and pure.

For orthogonality let $\boldsymbol \alpha =(\alpha_0,\dots,\alpha_{n-1})\in k^n$ and $ \boldsymbol \gamma = (\gamma_0,\dots ,\gamma_{m-1})\in \Gamma^m$. We want to show that $tp(\boldsymbol \alpha) \cup tp(\boldsymbol \gamma) \vdash tp(\boldsymbol \alpha, \boldsymbol \gamma)$. 
So take $\beta_0,\dots,\beta_{n-1} \equiv \alpha_0,\dots,\alpha_{n-1}$ and $\delta_{0},\dots,\delta_{m-1} \equiv \gamma_0,\dots,\gamma_{m-1}$. Then by iterating Proposition \ref{section embedding for residue field} and Proposition \ref{section embedding for valuegroup} we get that the generated structures
$$\langle \mathbb{Q}(\iota(\alpha_i),s(\gamma_j) : i<n, j< m),\langle \gamma_j : j<m\rangle ,\mathbb{Q}(\alpha_i : i < n) \rangle$$ 
and 
$$\langle \mathbb{Q}(\iota(\beta_i),s(\delta_j) : i<n, j< m),\langle \delta_j : j<m\rangle ,\mathbb{Q}( \beta_i : i < n) \rangle$$
are isomorphic. By $\VF$-quantifier elimination, the isomorphism is already elementary. Then $tp(\boldsymbol \alpha,\boldsymbol \gamma)$ is already fully implied.
\end{proof}
\end{prop}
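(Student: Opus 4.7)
The plan is to use Theorem~\ref{qe for lift section pas} as the main lever and Propositions~\ref{embedding residue} and~\ref{embedding value} as the workhorses. In each part of the statement I would fix a small $\lann_{\Gamma,k,\iota,s}$-substructure $\A$ over which we want to test equivalence of types, realize the competing candidates in a sufficiently saturated extension, extend the identity on $\A$ to an $\lann_{\Gamma,k,\iota,s}$-isomorphism of larger generated substructures by iteratively adjoining the relevant tuples, and then use $\VF$-quantifier elimination to upgrade this isomorphism to an elementary equivalence.

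For stable embeddedness of $\Gamma$, I would take $\boldsymbol\gamma,\boldsymbol\delta \in \Gamma_\cN^n$ with the same $\lann_\Gamma$-type over $\Gamma_\cM$. An iterated application of Proposition~\ref{embedding value}, one coordinate at a time, produces an $\lann_{\Gamma,k,\iota,s}$-isomorphism $\cM\langle s(\boldsymbol\gamma)\rangle \iso \cM\langle s(\boldsymbol\delta)\rangle$ over $\cM$; the ``moreover'' clause guarantees that the residue field does not grow along the way, so the hypotheses on the $k$-sort remain trivially satisfied throughout the induction. Quantifier elimination then yields $\boldsymbol\gamma \equiv_\cM \boldsymbol\delta$. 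The residue field case is symmetric via Proposition~\ref{embedding residue}, whose ``moreover'' clause says that adjoining $\iota(\alpha)$ never enlarges $\Gamma$. Purity is the same argument carried out over the prime field $\mathbb{Q}$ rather than over $\cM$, with the small generated substructures such as $\langle \mathbb{Q}(s(\boldsymbol\gamma)),\mathbb{Z}\cdot\boldsymbol\gamma, \mathbb{Q}\rangle$ written down explicitly so that no parameters from outside the pure sort are needed.

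For orthogonality I would interleave the two constructions: given $\boldsymbol\alpha \in k^n$, $\boldsymbol\gamma \in \Gamma^m$ and tuples $\boldsymbol\beta \equiv \boldsymbol\alpha$, $\boldsymbol\delta \equiv \boldsymbol\gamma$, first adjoin $\iota(\boldsymbol\alpha)$ and $\iota(\boldsymbol\beta)$ using Proposition~\ref{embedding residue}, then adjoin $s(\boldsymbol\gamma)$ and $s(\boldsymbol\delta)$ using Proposition~\ref{embedding value}, obtaining an isomorphism of generated substructures which by Theorem~\ref{qe for lift section pas} is elementary, whence $\mathrm{tp}(\boldsymbol\alpha) \cup \mathrm{tp}(\boldsymbol\gamma) \vdash \mathrm{tp}(\boldsymbol\alpha,\boldsymbol\gamma)$. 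The only delicate point, and what I expect to be the main obstacle, is precisely this interleaving: one must verify that extending by a lift does not destroy the value-group hypothesis needed to extend subsequently by a section, and conversely. But this is exactly the content of the two ``moreover'' clauses $\Gamma_{\A\langle \iota(\alpha)\rangle} = \Gamma_\A$ and $k_{\A\langle s(\gamma)\rangle} = k_\A$, which render the two sorts mutually non-interacting over any substructure and allow the inductive construction to run cleanly.
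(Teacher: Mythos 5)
Your proposal matches the paper's proof: the same iterated applications of Propositions \ref{embedding residue} and \ref{embedding value} build $\lann_{\Gamma,k,\iota,s}$-isomorphisms of generated substructures (over $\cM$ for stable embeddedness, over the prime field $\mathbb{Q}$ for purity, and with both sorts interleaved for orthogonality), after which the relative quantifier elimination of Theorem \ref{qe for lift section pas} upgrades these isomorphisms to elementary maps. Your explicit emphasis on the two ``moreover'' clauses $\Gamma_{\A\langle \iota(\alpha)\rangle} = \Gamma_\A$ and $k_{\A\langle s(\gamma)\rangle} = k_\A$ as the reason the interleaved adjunctions never disturb each other is exactly the mechanism the paper's proof relies on implicitly when it iterates the two propositions.
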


\section{Multiplicative Ordered Abelian Groups}
Let $G$ be an ordered abelian group with an automorphism $\sigma$.
Then we can write $G$ as a $\mathbb{Z}[\sigma]$-module through the action on $G$ by $L(x)=m_k\sigma^k(x)+\dots+m_0 x, m_i \in \mathbb{Z}$. 

We restrict to the cases where $G$ is an orderd $\mathbb{Z}[\sigma]$-module i.e. for all $L(x)$ we have 
 $$(\forall x>0: L(x)>0) \text{ or } (\forall x>0 :L(x)=0) \text{ or } (\forall x>0 :L(x)<0).$$
 Once we identify the  $L(x) \in \mathbb{Z}[\sigma]$ inducing the same action on $G$, we get an ordered ring $\mathbb{Z}[\sigma]/_\approx =:\mathbb{Z}[\rho]$ where we denote the quotient map by $\Phi$.
 
We can embed this ring into a real closed field and write the action of $\sigma$ as multiplication with $\rho$.
We define the set of $\zsig$-positives of $G$ as $$ptp_{\zsig}(G)=\{L\in \zsig : \forall x \in G (x>0 \Rightarrow L(x) > 0)\}.$$
We say $G$ and $G'$ have the same $\rho$ if their sets of $\zsig$-positives coincide.

\begin{defi}
$\MODAG_\rho$ is the theory of the class of non-trivial multiplicative ordered difference groups of a given $\rho$. We will call it $\MODAG$ for arbitrary $\rho$. For $H$ a substructure of  $G \models\MODAG_\rho$ and $\gamma \in G$ we write $H\langle\gamma\rangle$ for the generated ordered difference group of $\gamma$ over $H$ in $G$. In particular $H\langle \gamma \rangle$ is closed under $\rho^{-1}$.
\end{defi}
 
\begin{defi}
Let $G$ be a $\MODAG_\rho$. It is called divisible, if for each  $L\in\zsig$ with $\Phi(L)\neq 0$ and $b \in G$   the equation $L(x)=b$ has a solution.  We denote the theory of non-trivial divisible models of $\MODAG_\rho$ as $\text{div-}\MODAG_\rho$.
\end{defi}
\begin{fact}[\cite{pal2012multiplicative}]
The theory $\text{div-MODAG}_\rho$ has quantifier elimination and is the model companion of $MODAG_\rho$. Furthermore, $\MODAG_\rho$ is o-minimal and therefore $\NIP$.
\end{fact}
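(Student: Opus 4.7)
The plan is to mimic the classical proof that ordered divisible abelian groups have quantifier elimination, exploiting the embedding $\zrho = \zsig/_\approx \hookrightarrow \mathbb{R}_\rho$ into a real closed field under which $\sigma$ acts as multiplication by $\rho$. A divisible model $G \models \text{div-}\MODAG_\rho$ becomes an ordered $\mathbb{Q}[\rho]$-module (and, after $\aleph_1$-saturation, an ordered $\mathbb{R}_\rho$-vector space), so that quantifier-free $\zsig$-linear atomic formulas translate to standard $\mathbb{R}_\rho$-linear atomic conditions.

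For quantifier elimination I would run a one-point back-and-forth. Given substructures $H \subseteq G$ and $H \subseteq G'$ of models of $\text{div-}\MODAG_\rho$ with $G'$ sufficiently saturated, and $\gamma \in G \setminus H$, I split into two cases. First, if there exists $L \in \zsig$ with $\Phi(L) \neq 0$ and $L(\gamma) \in H$, pick such $L$ of minimal degree and use divisibility of $G'$ to obtain $\gamma' \in G'$ with $L(\gamma') = L(\gamma)$ and realising the same cut over $H$ as $\gamma$. Any two such $\gamma'$ differ by an element of $\ker(L)$ that acts trivially on the cut, so the extension $H\langle \gamma \rangle \to H\langle \gamma' \rangle$ is well-defined. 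Otherwise $\gamma$ is ``$\zsig$-transcendental'' over $H$, the module $H \langle \gamma \rangle$ is free as a $\zrho$-module over $H$, and its order is fully determined by the cut $\gamma$ fills in $H$; any $\gamma' \in G'$ realising the image cut works.

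The model-companion assertion then follows: QE yields model completeness of $\text{div-}\MODAG_\rho$, and it remains to check that every $G \models \MODAG_\rho$ embeds into a divisible model with the same $\rho$. One constructs a ``$\rho$-divisible hull'' by iteratively adjoining solutions to $L(x) = b$ for $\Phi(L) \neq 0$ and $b \in G$, verifying at each step that the order and the set of $\zsig$-positives extend uniquely. For o-minimality, QE reduces any one-variable formula over parameters to a Boolean combination of atomic conditions $L(x) \bowtie c$ with $L \in \zsig$, $c$ a parameter, and $\bowtie \in \{=,<\}$; viewed through the $\mathbb{R}_\rho$-action each such set is a point or a half-line, so every definable subset of $G$ is a finite union of points and open intervals. $\NIP$ is then immediate from Pillay--Steinhorn.

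The main obstacle is the ``algebraic'' step of the back-and-forth: checking that when some $L \in \zsig$ with $\Phi(L) \neq 0$ sends $\gamma$ into $H$, the isomorphism type of $H\langle \gamma \rangle$ as an ordered $\zsig$-module is genuinely determined by $L$, by $L(\gamma)$, and by the cut of $\gamma$ in $H$. This requires a careful analysis of how $\ker(L)$ interacts with the order, so that any two preimages of $L(\gamma)$ realising the same cut give isomorphic extensions; once this is in place, the model-companion and o-minimality statements drop out as routine consequences.
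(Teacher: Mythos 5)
This statement appears in the paper as a \emph{Fact} cited directly from \cite{pal2012multiplicative}; the paper offers no proof of its own, so there is nothing to compare your attempt against. Reviewing your proposal on its merits: the overall architecture (one-point back-and-forth on quantifier-free diagrams, splitting on $\zsig$-algebraicity, constructing a divisible hull for the model-companion claim, and reading off o-minimality and hence $\NIP$ from the shape of atomic formulas) is indeed the standard route and essentially the one Pal takes, so the plan is sound.

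That said, the ``main obstacle'' you identify at the end is a phantom. If $\Phi(L)\neq 0$, then $L$ is injective on any model of $\MODAG_\rho$: ordered abelian groups are torsion-free, and $\Phi(L)\cdot x = 0$ with $\Phi(L)\neq 0$ forces $x=0$ precisely because $L$ has constant sign on each of $\{x>0\}$ and $\{x<0\}$. So $\ker(L)=0$ and there is no family of preimages of $L(\gamma)$ to worry about; the solution $\gamma'$ of $L(\gamma')=f(L(\gamma))$ in $G'$ is unique. Moreover, you do not need the cut of $\gamma$ at all in this case: assuming WLOG $\Phi(L)>0$, for any $M\in\zsig$ and $h\in H$ one has $M(\gamma)+h>0$ if and only if $M(L(\gamma))+L(h)>0$, a condition living entirely inside $H$. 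So the order on $H\langle\gamma\rangle$ is already determined by $L$ and $L(\gamma)$, and the algebraic step is the routine one. The case that actually requires cut bookkeeping is the ``transcendental'' one, and there you should take the cut of $\gamma$ in the divisible hull of $H$ inside $G$ (which you are free to arrange, since passing to divisible hulls is canonical and order-preserving); a cut in $H$ alone does not suffice because $m\cdot\gamma > -h$ cannot be rewritten as $\gamma > (-h)/m$ without dividing. Finally, the parenthetical that $\aleph_1$-saturation upgrades a divisible model to an ordered $\mathbb{R}_\rho$-vector space is not correct -- saturation lets you realize the cut where $r\cdot x$ ``should'' be, but that cut is not filled by a unique point, so no canonical $\mathbb{R}_\rho$-action exists -- but since nothing in your argument actually uses that upgrade, it is a harmless aside rather than a gap.
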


\section{Valued Difference Fields}
\begin{defi}
A valued difference field in equicharacteristic $0$ is a valued field $\K=(K,\Gamma,k;v,\pi)$   with a distinguished automorphism $\sigma$ of $K$ which also satisfies $\sigma(\mathcal{O}_\K)=\mathcal{O}_\K$.
Consequently, $\sigma$ induces an automorphism of the residue field $\bar \sigma:k\to k$ via
$$\pi (a) \mapsto \pi(\sigma(a))\text{, for all } a\in \mathcal{O}_\K.$$
Equipping $k$  with $\bar \sigma$ yields the structure of a difference field.
Likewise $\sigma$ also induces an automorphism $\sigma_\Gamma: \Gamma \to \Gamma$ on the value group via
$$\gamma \mapsto v(\sigma(a))\text{ for some $a$ with } v(a)=\gamma.$$
We will also write $\sigma_\Gamma$ as $\sigma$ if it is clear from the context where it operates.  

We denote difference field extensions by $k \langle a \rangle$.
With $\sigma_\Gamma$ the value group has the structure of an ordered abelian difference group. 

For our purposes, we will define the languages $\lann_{ring,\sigma}=\lann_{ring}\cup\{\bar \sigma\}$, $\lann_{\Gamma,\sigma}=\lann_{\Gamma}\cup\{\sigma_{\Gamma}\}$ on their respective sorts and $\lann_{\Gamma,k,\sigma}$ as the extension of $\lann_{\Gamma,k}$ by $\sigma$ on all sorts.

For difference fields $K\subseteq K'$ and $a \in K' \setminus K$ we will write $K\langle a \rangle$ for the inversive difference ring generated by $a$ over $K$. In particular $K\langle a \rangle$ is closed under $\sigma^{-1}$. This is a slight abuse of notation as $\sigma^{-1}$ is not part of the language, but as $\sigma$ is an automorphism of the ambient field this closure is well defined.
\end{defi}

To each polynomial $F(x_0,\dots,x_{n})\in K[x_0,\dots,x_n]$ we assign a $\sigma$-polynomial $P(x)$ in one variable given by $P(x)=F(x,\sigma(x),\dots,\sigma^n(x))$.   We will often write this as $P(x)=F(\boldsymbol{\sigma}(x))$. The highest $d$ such that some coefficient of some monomial containing $ \sigma^d(x) $ is non-zero is called the order of $P$.

Let $\boldsymbol a=(a_0,\dots,a_n)$ be a tuple of field elements and $\boldsymbol I=(i_0,\dots,i_n)\in \mathbb{N}^{n+1}$ a multi-index and define $\boldsymbol a^{\boldsymbol I}=a_0^{i_0}\dots a_{n}^{i_{n}}$.
We will denote the index with $1$ in the $i$th component and 0 everywhere else as $e_i$. 

On $\mathbb{N}^{n+1}$ we establish $I\leq J$ as the  partial product ordering induced by $\mathbb{N}$, and $I < J$ for $I \leq J$ and $I \neq J$. 

Let $\tau$ be any element from any ring.
We define the $\tau$-length of $\boldsymbol I$ as $|\boldsymbol I|_\tau=i_0\tau^0+i_1\tau^1+\dots+i_n\tau^n$ and the length of $\boldsymbol I$ as $|\boldsymbol I|:=|\boldsymbol I|_1$.
Then $|\boldsymbol I|\in \mathbb{N}$ and $|\boldsymbol I|_\tau$ is in the ring $\tau$ stems from.  
Motivated by Taylor expansions, we define for tuples of indeterminates $\boldsymbol x=(x_0,\dots,x_n)$, $\boldsymbol y=(y_0,\dots,y_n)$ and any polynomial $F(\boldsymbol x)$ over $K$ of characteristic 0, the polynomial
$$F_{(\boldsymbol I)}(\boldsymbol x)=\left(\dfrac{\partial}{\partial x_0}\right)^{i_0}\dots\left(\dfrac{\partial}{\partial x_n}\right)^{i_n} \dfrac{F( \boldsymbol x)}{i_0!\dots i_n!},$$
and we write $P_{(\boldsymbol  I)} (x)$ for the polynomial if  $P_{(\boldsymbol  I)} (x) =F_{(\boldsymbol I )} (\boldsymbol \sigma (x))$.

\begin{defi}
We say $k$ is linear difference closed, if  or all $\alpha_0,\dots,\alpha_n \in k$ not all $0$ there exists a solution in $k$ for
\begin{equation*}
1+\alpha_0 x +\alpha_1  \bar\sigma (x) + \dots + \alpha_n \bar \sigma ^n(x)=0.
\end{equation*}
\end{defi}

\begin{defi}
Let $\mathcal{K}=(K,\Gamma, k;v,s,\pi,\iota,\sigma)$ be a valued difference field. $\mathcal{K}$ is called $\sigma$-multiplicative if  $\Gamma $ is a $MODAG$ with $\rho \geq 1$.
\end{defi}

\subsection{$\sigma$-henselianity}

Following Pal \cite[Definition 5.1]{pal2012multiplicative}, we introduce an analogous property to henselianity for the difference valued field case.
\begin{defi}
For a $\sigma$-polynomial $P(x)$ over $K$ of order $\leq n$ and $a \in K$, we say that $(P,a)$ is in $\sigma$-henselian configuration, if $P$ is non-constant and there are  $0 \leq i\leq n$ and a $\gamma \in \Gamma$ such that:
\begin{itemize}
\item $v(P(a))=v(P_{(e_i)}(a))+\rho^i \gamma \leq v( P_{(e_j)}(a))+\rho^j\gamma$ for all $0\leq j\leq n$,
\item $v(P_{(\boldsymbol J)}(a))+|\boldsymbol J|_\rho\gamma < v(P_{(\boldsymbol L)}(a))+|\boldsymbol L|_\rho\gamma$ for all $0\neq \boldsymbol J<\boldsymbol L\in \mathbb{N}^{n+1}$ and $P_{(\boldsymbol J)}\neq 0$. 
\end{itemize}
In particular if $(P,a)$ is in $\sigma$-henselian configuration we set $\gamma(P,a)=\gamma$ for the unique $\gamma$ above.
\end{defi}

In analogy to henselian valued fields, we define:

\begin{defi}
A multiplicative valued difference field is $\K$ is called $\sigma$-henselian if for all $(P,a)$ in $\sigma$-henselian configuration there is some $b \in K$ such that $v(b-a)=\gamma(P,a)$ and $P(b)=0$. 
\end{defi}
It is a fairly natural consideration to add a lifting map to $\sigma$-henselian valued difference fields.
\begin{fact}[{\cite{pal2012multiplicative}}]\label{existence of lift}
Let $\K$ be $\sigma$-henselian without a lift and $K_0\subseteq \mathcal{O}_\K$ be a $\sigma$-subfield of $K$. Then there is a $\sigma$-subfield $K_0\subseteq K_1\subseteq \mathcal{O}_\K$ with $\pi(K_1)=k$.
\end{fact}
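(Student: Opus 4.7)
The plan is a standard Zorn maximality argument. Apply Zorn's lemma to the poset of $\sigma$-subfields $L$ satisfying $K_0 \subseteq L \subseteq \mathcal{O}_\K$, ordered by inclusion, to obtain a maximal such $L$; call it $K_1$. Any subfield of $\mathcal{O}_\K$ is automatically contained in $\mathcal{O}_\K^\times \cup \{0\}$ (a unit is needed to invert each non-zero element), so $\pi$ restricts to an injective difference-ring homomorphism $K_1 \to k$ which identifies $K_1$ with the difference subfield $\pi(K_1) \subseteq k$. It thus suffices to show $\pi(K_1) = k$; suppose otherwise and pick $\alpha \in k \setminus \pi(K_1)$.

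If $\alpha$ is $\sigma$-transcendental over $\pi(K_1)$, pick any lift $a \in \mathcal{O}_\K^\times$ of $\alpha$. For every non-zero $\sigma$-polynomial $P$ over $K_1$ the image $\bar P$ over $\pi(K_1)$ is non-zero, hence $\bar P(\alpha) \neq 0$ and so $v(P(a)) = 0$. A standard Gauss-style computation then shows every non-zero element of $K_1 \langle a \rangle$ has valuation $0$, whence $K_1 \langle a \rangle \subseteq \mathcal{O}_\K$ strictly extends $K_1$, contradicting maximality.

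If instead $\alpha$ is $\sigma$-algebraic over $\pi(K_1)$, let $\bar P$ be a $\sigma$-minimal polynomial of $\alpha$ of order $n$, and lift it coefficient-wise via the inverse of $\pi\!\upharpoonright K_1$ to a $\sigma$-polynomial $P$ over $K_1$. In characteristic $0$, $\sigma$-minimality provides some index $e_i$ with $\bar P_{(e_i)}(\alpha) \neq 0$, so that for any preimage $a$ of $\alpha$ in $\mathcal{O}_\K$ we have $v(P_{(e_i)}(a)) = 0$ while $v(P(a)) > 0$. I would then argue that, possibly after a perturbation of $a$ inside its residue class, the pair $(P,a)$ is in $\sigma$-henselian configuration with $\gamma(P,a)$ the unique $\gamma \in \Gamma_\K$ with $\rho^i \gamma = v(P(a))$. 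By $\sigma$-henselianity one obtains $b \in \mathcal{O}_\K$ with $P(b) = 0$ and $v(b - a) = \gamma(P,a) > 0$, so $\pi(b) = \alpha$. The difference subfield $K_1\langle b\rangle$ is still contained in $\mathcal{O}_\K$ (its generators are units modulo $\mathfrak{m}$ by $\pi(b) = \alpha \neq 0$ and algebraicity of the $\sigma$-orbit of $b$ over $K_1$) and strictly contains $K_1$, again contradicting maximality.

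The main obstacle I expect lies in this $\sigma$-algebraic case: verifying that for a suitable lift $a$ the pair $(P,a)$ genuinely satisfies the strict comparisons $v(P_{(\boldsymbol J)}(a)) + |\boldsymbol J|_\rho \gamma < v(P_{(\boldsymbol L)}(a)) + |\boldsymbol L|_\rho \gamma$ for every $0 \neq \boldsymbol J < \boldsymbol L$ with $P_{(\boldsymbol J)} \neq 0$, and not merely for the single-variable derivatives $e_j$. This is where one must combine $\sigma$-minimality of $\bar P$ with the multiplicative hypothesis $\rho \geq 1$ on $\Gamma$ to force the mixed-derivative valuations to separate strictly, adjusting $a$ within its residue class if necessary.
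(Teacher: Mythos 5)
The paper states this as a \textbf{Fact} and attributes it to Pal \cite{pal2012multiplicative}; it gives no proof, so the relevant comparison is with Pal's own lifting argument rather than anything in the present text. Your Zorn-maximality skeleton and the case split into $\sigma$-transcendental and $\sigma$-algebraic residues is the standard architecture, and the $\sigma$-transcendental branch is essentially complete: the reduction along $\pi\upharpoonright K_1$ of any nonzero $\sigma$-polynomial over $K_1$ is nonzero and does not vanish at $\alpha$, so $v(P(a)) = 0$, and the Gauss computation puts $K_1\langle a\rangle$ inside $\mathcal{O}_\K^\times \cup \{0\}$.

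The gap you flag in the $\sigma$-algebraic branch is genuine and is in fact the real content of the lemma, not a verification. The $\sigma$-henselian configuration conditions compare $v(P_{(\boldsymbol J)}(a)) + |\boldsymbol J|_\rho\gamma$ across \emph{all} nonzero multi-indices, and a lift $a$ of $\alpha$ gives you no control over the mixed partials $P_{(\boldsymbol J)}(a)$ with $|\boldsymbol J| \geq 2$: there is no reason for these to separate strictly, and replacing $a$ by $a + \epsilon$ with $v(\epsilon) > 0$ does not in general repair the comparisons, since all the $P_{(\boldsymbol J)}(a)$ move together by higher-order corrections. What is actually needed is the refinement machinery from Pal (and Azgin--van den Dries): whenever the strict inequalities fail one passes to a $\sigma$-polynomial of smaller complexity, and the argument is an induction on complexity interwoven with pseudo-Cauchy sequence considerations, not a perturbation of the single point $a$. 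A secondary gap: once a root $b$ with $\pi(b) = \alpha$ is produced, the inclusion $K_1\langle b\rangle \subseteq \mathcal{O}_\K$ does not follow just because each $\sigma^j(b)$ is a unit; you need the fundamental inequality to see that the residue extension of $K_1(b,\sigma(b),\dots,\sigma^n(b))/K_1$ already exhausts its degree, forcing the value group to stay fixed, and then iterate along the $\sigma$-orbit. As written, the proposal correctly locates the difficulty but does not close it.
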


\begin{fact}[{\cite[Corollary 5.9]{pal2012multiplicative}}]\label{fact:ps+ldc=sh}
A pseudo-complete $\sigma$-multiplicative valued difference field with linear difference closed residue field is $\sigma$-henselian.
\end{fact}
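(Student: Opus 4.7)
The plan is to mimic the classical Newton iteration proof of Hensel's lemma, now in the $\sigma$-difference setting, using linear difference closedness of $k$ as the residue-field ingredient that replaces algebraic closedness of the tangent root. Given $(P,a)$ in $\sigma$-henselian configuration with $\gamma = \gamma(P,a)$, I would construct a pseudo-Cauchy sequence $(a_\nu)$ with $a_0 = a$ such that each step strictly increases $v(P(a_\nu))$ while keeping $v(a_{\nu+1}-a_\nu) = \gamma(P,a_\nu) \geq \gamma$, and then appeal to pseudo-completeness to obtain a pseudo-limit $b \in K$ with $P(b)=0$ and $v(b-a) = \gamma$.

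For the single Newton step, expand
\[P(a+y) \;=\; P(a) \,+\, \sum_{\boldsymbol I \neq 0} P_{(\boldsymbol I)}(a)\,\boldsymbol{\sigma}(y)^{\boldsymbol I}.\]
The $\sigma$-henselian configuration hypothesis ensures that the linear terms $\sum_j P_{(e_j)}(a)\,\sigma^j(y)$ realize the minimum valuation $v(P(a))$ once $v(y)=\gamma$, while all higher-order monomials strictly exceed this value. Writing $y = cu$ with $v(c)=\gamma$ and dividing through by an element of value $v(P(a))$, the equation $P(a+y)=0$ reduces, modulo terms of strictly positive residue valuation, to a nontrivial linear $\sigma$-difference equation
\[\beta_0 + \beta_1\,\bar\sigma(\bar u) + \dots + \beta_n\,\bar\sigma^n(\bar u) \;=\; 0\]
in $k$ with not all $\beta_j$ zero (the one corresponding to the minimizing index $i$ from the configuration is a unit, since $v(P_{(e_i)}(a))+\rho^i\gamma = v(P(a))$). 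Linear difference closedness supplies a solution $\bar u \in k$, which lifts to $u \in \mathcal{O}^\times$, producing $a' := a+cu$ with $v(a'-a)=\gamma$ and $v(P(a')) > v(P(a))$.

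The main technical obstacle will be verifying that $(P,a')$ is again in $\sigma$-henselian configuration, with $\gamma(P,a') \geq \gamma(P,a)$, so that the iteration can continue. This requires a careful $\sigma$-Taylor expansion of $P$ centered at $a'$ to compare the valuations of the $P_{(\boldsymbol I)}(a')$ against those of the $P_{(\boldsymbol I)}(a)$, exploiting the strict inequalities provided by the second clause of the configuration hypothesis (this is where the $\rho \geq 1$ hypothesis of $\sigma$-multiplicativity is crucial, as it controls how the $\sigma^j$-shifts interact with valuations under translation). Once preservation is in hand, iteration yields a pseudo-Cauchy sequence with strictly increasing gaps $v(a_{\nu+1}-a_\nu)$. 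By pseudo-completeness it has a pseudo-limit $b \in K$; and since $v(P(a_\nu)) \to \infty$ while $v(b-a_\nu) \geq v(a_{\nu+1}-a_\nu)$, a final continuity argument on the $\sigma$-Taylor expansion at $b$ forces $P(b)=0$, with $v(b-a)=\gamma$ inherited from the first step.
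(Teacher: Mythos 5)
This Fact is stated in the paper as a citation of Pal's work and is not proved there, so there is no internal argument to compare against; your sketch does follow the standard Newton-iteration strategy that underlies Pal's proof. Two points in your outline would, however, need repair before it could count as a proof. First, the step ``iteration yields a pseudo-Cauchy sequence \dots\ since $v(P(a_\nu)) \to \infty$'' is not justified: there is no reason for an $\omega$-indexed strictly increasing sequence of valuations to be cofinal in $\Gamma$. The construction must be transfinite, and termination is obtained by a maximality argument rather than by letting the valuations run to infinity -- one takes a maximal pseudo-Cauchy chain $(a_\nu)_{\nu<\lambda}$ along which $v(P(a_\nu))$ strictly increases, uses pseudo-completeness to produce a pseudo-limit $b$ (if $\lambda$ is a limit ordinal), and then shows $P(b)=0$ on pain of being able to extend the chain by one more Newton step, contradicting maximality. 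Second, the preservation statement you flag as ``the main technical obstacle'' -- that after the Newton step $(P,a')$ is again in $\sigma$-henselian configuration with strictly larger $\gamma$ -- is not a side issue to defer but is where essentially all the content of Pal's argument lies: it requires a delicate comparison of the valuations $v(P_{(\boldsymbol I)}(a'))$ against $v(P_{(\boldsymbol I)}(a))$ using the $\sigma$-Taylor expansion and the multiplicativity hypothesis $\rho\geq 1$, and without it the sketch does not get off the ground. The single Newton step itself -- reducing modulo $\mathfrak{m}$ to a linear $\sigma$-difference equation with a nonzero coefficient at the minimizing index $i$ and invoking linear difference closedness -- is described correctly.
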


\begin{rem}
For an ordered abelian group $\Gamma$ and a field $k$ the Hahn series field of $k$ and $\Gamma$ is given by $$k((t^\Gamma))=\left\lbrace  a= \sum_{\gamma \in \Gamma} a_\gamma t^\gamma \phantom{x} \vline \phantom{x} a_\gamma\in k  , supp(a)=\{\gamma \in \Gamma : a_\gamma \neq 0\} \text{ is well-ordered} \right\rbrace.  $$  
Then a lift is given by $a \mapsto a  t^0$ and a section by $\gamma \mapsto t^\gamma$.
Furthermore $k((t^\Gamma))$ is pseudo-complete.

In particular, if $\Gamma \models \MODAG$ and $k$ is a linear difference closed difference field, then there is an induced $\sigma$-equivariant automorphism on $k((t^\Gamma))$ defined by
$$\sigma \left( \sum_{\gamma \in \Gamma} a_\gamma t^\gamma \right) = \sum_{\gamma \in \Gamma} \bar \sigma(a_\gamma) t^{\sigma_\Gamma(\gamma)}.$$ 
By Fact \ref{fact:ps+ldc=sh}, $k((t^\Gamma))$ is a $\sigma$-henselian multiplicative valued difference field with lift and ($\sigma$-equivariant) section.
\end{rem}

\subsection{Quantifier Elimination}
To prove quantifier elimination we will again need to extend the value group and the residue field. To avoid confusion, we will now denote the non-difference valued field extensions of a structure $\A$ by an element $a$ as $\A (a)$ in contrast to the valued difference field extensions $\A \langle a \rangle$.

We will assume by convention that all lifts and sections of valued difference fields are $\sigma$-equivariant.
\begin{lem}\label{extending residue field 2} 
Let $\K$ and $\K'$ be multiplicative valued difference fields with lift and section. 
Let $\A$ be a substructure of $\K$ and $\K'$. Let $\alpha \in k_\K \setminus k_{\A}$.  If there is a $\beta \in k_\Kl$ such that $k_\A \langle \alpha \rangle \to k_\A \langle \beta \rangle$ is a difference field isomorphism,  
 then there is an isomorphism of valued difference fields $ \A \langle \iota(\alpha) \rangle \to \A \langle \iota(\beta) \rangle$ over $\A$ sending $\iota(\alpha) \mapsto \iota(\beta)$. Furthermore for the value group it holds that $\Gamma_{\A\langle \iota(\alpha) \rangle}=\Gamma_\A$.
\begin{proof}
We have to verify that the extension $\A\langle \iota (\alpha)\rangle$ as a valued difference field is uniquely determined.

This is the case as by Proposition \ref{section embedding for residue field} the valued field extension $\A(\iota(\alpha))$  is unique as a valued field with lift and section and by iterating we get $\A(\sigma^n(\iota(\alpha)) : n \in \mathbb{N})$ uniquely determined as a valued difference field. As we also have the lift, the automorphism is uniquely determined as $\sigma(\iota(\alpha))=\iota(\sigma(\alpha))$. The same is true for $\iota(\beta)$.
Applying $\sigma^{-k}$ for all $k\in \mathbb{N}$ yields the desired map.
Because the value group doesn't change when applying Proposition \ref{section embedding for residue field}, $\Gamma_{\A\langle \iota(\alpha)\rangle}=\Gamma_\A$ follows trivially.
\end{proof}
\end{lem}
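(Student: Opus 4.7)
The plan is to realize $\A\langle \iota(\alpha)\rangle$ as an ascending union of ordinary valued-field extensions, handle each stage with Proposition \ref{section embedding for residue field}, and then observe that the $\sigma$-action on the union is forced by $\sigma$-equivariance of the lift. This matches the proof strategy used for the ordinary residue-field extension, but with a transfinite (really just $\mathbb{Z}$-indexed) bookkeeping layer on top.

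First I would unwind the generated difference structure. Because lifts and sections are assumed $\sigma$-equivariant, $\sigma^n(\iota(\alpha)) = \iota(\bar\sigma^n(\alpha))$ for every $n \in \mathbb{Z}$, and so
$$\A\langle \iota(\alpha)\rangle \;=\; \A\bigl(\iota(\bar\sigma^n(\alpha)) : n \in \mathbb{Z}\bigr)$$
as a valued field with lift and section, equipped with the $\sigma$-action that shifts the generators; likewise on the $\K'$-side with $\beta$ in place of $\alpha$. Since the hypothesized difference-field isomorphism sends $\alpha \mapsto \beta$, it sends $\bar\sigma^n(\alpha) \mapsto \bar\sigma^n(\beta)$ for every $n \in \mathbb{Z}$, so at each stage I have the data needed to apply Proposition \ref{section embedding for residue field}.

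Next I would carry out the inductive construction. Setting $\A_0 = \A$ and $\A_{n+1} = \A_n(\iota(\bar\sigma^n(\alpha)))$, with $\A'_n$ defined analogously on the $\K'$-side, Proposition \ref{section embedding for residue field} produces at each stage an isomorphism $\A_{n+1} \to \A'_{n+1}$ of valued fields with lift and section extending the previous stage, together with $\Gamma_{\A_{n+1}} = \Gamma_{\A_n} = \Gamma_\A$; if $\bar\sigma^n(\alpha)$ happens to already lie in $k_{\A_n}$ the step is trivial. The analogous construction with $\bar\sigma^{-1}$ handles the negative indices. Taking the union over all $n \in \mathbb{Z}$ gives an isomorphism $\A\bigl(\iota(\bar\sigma^n(\alpha)) : n \in \mathbb{Z}\bigr) \to \A\bigl(\iota(\bar\sigma^n(\beta)) : n \in \mathbb{Z}\bigr)$ of valued fields with lift and section, and $\Gamma$ stays equal to $\Gamma_\A$ throughout.

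Finally I would check that this isomorphism commutes with $\sigma$, making it an isomorphism of valued difference fields. By $\sigma$-equivariance of $\iota$, the action of $\sigma$ on the union is completely determined by its action on $\A$ together with $\sigma(\iota(\bar\sigma^n(\alpha))) = \iota(\bar\sigma^{n+1}(\alpha))$, and the uniqueness clauses of Proposition \ref{section embedding for residue field} force the constructed isomorphism to respect this. The main point to be careful about is exactly this gluing of $\sigma$ across the inductive steps, but since $\sigma$ is pinned down rather than built freely, once the underlying valued-field isomorphism is in place nothing more is required. The equality $\Gamma_{\A\langle\iota(\alpha)\rangle} = \Gamma_\A$ is then immediate, as no stage of the iteration enlarges the value group.
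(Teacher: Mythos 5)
Your proof is correct and follows essentially the same strategy as the paper's: use $\sigma$-equivariance of the lift to write $\A\langle\iota(\alpha)\rangle$ as the valued-field-with-lift-and-section extension generated by $\{\iota(\bar\sigma^n(\alpha)) : n \in \mathbb{Z}\}$, iterate Proposition \ref{section embedding for residue field} to build the isomorphism stage by stage, and then note that the $\sigma$-action is pinned down by $\sigma(\iota(\alpha)) = \iota(\bar\sigma(\alpha))$ while the value group never grows. You spell out a few details the paper leaves implicit (that the given difference-field isomorphism supplies the correct $\bar\sigma^n(\alpha)\mapsto\bar\sigma^n(\beta)$ data at each stage, the trivial case where $\bar\sigma^n(\alpha)$ already lies in $k_{\A_n}$, and the final $\sigma$-commutation check), but these are refinements rather than a different route.
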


Similarly, we can show for the value group:
\begin{lem}\label{extending value group}
Let $\K$ and $\K'$ be multiplicative valued difference fields with lift and section. 
Let $\A$ be a substructure of $\K$ and $\K'$. Let $\gamma \in \Gamma_\K \setminus \Gamma_{\A}$.  If there is a $\gamma' \in \Gamma_\Kl$ such that $\Gamma_\A \langle \gamma \rangle \to \Gamma_\A \langle \gamma ' \rangle$ is a MODAG isomorphism,  
 then there is an isomorphism of valued difference fields $ \A \langle s(\gamma) \rangle \to \A \langle s(\gamma') \rangle$ over $\A$ sending $s(\gamma) \mapsto s(\gamma')$. Furthermore, for the residue field, it holds that $k_{\A\langle \gamma\rangle}=k_\A$.
\begin{proof}
We again have to verify that the extension of valued difference fields $\A\langle s(\gamma)\rangle$ is uniquely determined.

This is the case as by Proposition \ref{section embedding for valuegroup} the valued field extension $\A(s(\gamma))$ is unique as a valued field with lift and section and by iterating we get $\A(\sigma^n(s(\gamma)) : n \in \mathbb{N})$ uniquely determined as a valued difference field. As we also have the section, the automorphism is uniquely determined as $\sigma(s(\gamma))=s(\sigma(\gamma))$. The same is true for $s(\gamma')$.
Applying $\sigma^{-k}$ for all $k\in \mathbb{N}$ yields the desired map.
Because the residue field doesn't change when applying Proposition \ref{section embedding for valuegroup}, $k_{\A\langle s( \gamma)\rangle}=k_\A$ follows trivially.
\end{proof}
\end{lem}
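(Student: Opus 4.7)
The plan is to mirror the proof of Lemma \ref{extending residue field 2} almost line-for-line, swapping the roles of residue field and value group, and replacing the lift by the section. The essential point is that once uniqueness of the underlying valued-field-with-section extension is granted by Proposition \ref{section embedding for valuegroup}, the difference-field structure is forced by $\sigma$-equivariance of the section.

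First I would observe that $\Gamma_\A\langle\gamma\rangle$, being the $\MODAG$-substructure generated by $\gamma$ over $\Gamma_\A$, contains all iterates $\sigma_\Gamma^n(\gamma)$ for $n\in\mathbb{Z}$, and the hypothesized $\MODAG$ isomorphism sends $\sigma_\Gamma^n(\gamma)\mapsto\sigma_\Gamma^n(\gamma')$. Forgetting $\sigma_\Gamma$, this gives an ordered abelian group isomorphism $\Gamma_\A\langle\sigma_\Gamma^n(\gamma):n\in\mathbb{Z}\rangle_{\mathrm{OAG}} \to \Gamma_\A\langle\sigma_\Gamma^n(\gamma'):n\in\mathbb{Z}\rangle_{\mathrm{OAG}}$, and in particular the type of $\sigma_\Gamma^n(\gamma)$ as an element extending the OAG generated so far matches that of $\sigma_\Gamma^n(\gamma')$ at every stage.

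Next I would iterate Proposition \ref{section embedding for valuegroup} along the chain
\[
\A \;\subseteq\; \A(s(\gamma)) \;\subseteq\; \A\bigl(s(\gamma),s(\sigma_\Gamma(\gamma))\bigr) \;\subseteq\; \dots
\]
and the matching chain on the $\gamma'$-side. At each step Proposition \ref{section embedding for valuegroup} gives a unique valued field extension with lift and section that sends $s(\sigma_\Gamma^n(\gamma))$ to $s(\sigma_\Gamma^n(\gamma'))$, and crucially leaves the residue field unchanged. Taking the directed union over $n\in\mathbb{N}$ produces a valued field isomorphism
\[
\A\bigl(s(\sigma_\Gamma^n(\gamma)):n\in\mathbb{N}\bigr) \;\longrightarrow\; \A\bigl(s(\sigma_\Gamma^n(\gamma')):n\in\mathbb{N}\bigr)
\]
over $\A$, compatible with lift and section. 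Since by convention the section is $\sigma$-equivariant, one has $\sigma(s(\sigma_\Gamma^n(\gamma)))=s(\sigma_\Gamma^{n+1}(\gamma))$, so the automorphism on this union is forced to be the shift, and the same formula on the primed side makes the constructed map a difference-field isomorphism. Closing under $\sigma^{-1}$ (which exists in the ambient fields because $\sigma$ is an automorphism) delivers the desired isomorphism $\A\langle s(\gamma)\rangle\to\A\langle s(\gamma')\rangle$.

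The one point requiring care is the bookkeeping at each iteration: to invoke Proposition \ref{section embedding for valuegroup} when adjoining $s(\sigma_\Gamma^n(\gamma))$, one needs an OAG isomorphism over the already enlarged value-group substructure, not merely over $\Gamma_\A$. This is handed to us by the $\MODAG$ isomorphism restricted to the relevant finitely generated piece, but should be stated explicitly. The residue field equality $k_{\A\langle s(\gamma)\rangle}=k_\A$ is then automatic, since each finite stage preserves the residue field by Proposition \ref{section embedding for valuegroup} and the inversive closure is a directed union of such stages.
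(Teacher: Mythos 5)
Your proof follows essentially the same route as the paper's: iterate Proposition \ref{section embedding for valuegroup} to adjoin $s(\sigma_\Gamma^n(\gamma))$ for $n\in\mathbb{N}$, use the $\sigma$-equivariance of the section to pin down the automorphism, and apply $\sigma^{-1}$ to pass to the inversive closure; the residue-field equality then falls out since each stage preserves $k_\A$. Your extra remark that the OAG isomorphism needed at each stage must be taken over the already enlarged value group (supplied by restricting the ambient $\MODAG$ isomorphism) is a correct and useful elaboration of a detail the paper leaves implicit, but it does not change the underlying argument.
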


Now we can prove quantifier elimination.

\begin{thm}
The theory of $\sigma$-henselian multiplicative valued difference fields with $\sigma$-equivariant lift and section eliminates ${\VF}$-quantifiers in the language $\mathcal{L}_{\Gamma, k, \sigma, \iota, s}$.
\begin{proof}
Let $\mathcal{K}$ be a model of size $\aleph_0$ with a substructure $\mathcal{A}\subseteq \mathcal{K}$.
Let $\Kl$ be an $\aleph_1$-saturated model and $f:\mathcal{A}\to \Kl$ an embedding, such that $f_\Gamma$ and $f_k$ are elementary in $\Gamma$ and $k$.
We will now extend $f$ to an embedding $\tilde f :\mathcal{K}\to \Kl$.
\begin{enumerate}
\item[Step 1:] By passing to the fraction field and closing under lift and section we can assume that $\VF_\A$ and $k_{A}=k_\A$ are fields and $\Gamma_{\VF_\A}=\Gamma_\A \models \text{OAG}$.
Furthermore, we can close the structure under $\sigma^{-1}$.
\item[Step 2:] We may achieve $k_{\VF_\A}=k_\K$. 

Let $\alpha \in k_\K \setminus k_{\VF_\A}$.
Take $\beta\in k_\Kl$ with the same $\lann_{ring,\sigma}$-type as $\alpha$ over $k_{\VF_\A}$ under $f$. 
Then there is an elementary difference field isomorphism $k_{\VF_\A} \langle \alpha \rangle \to f_k(k_{\VF_\A})\langle \beta \rangle$ extending $f_k$.
By Lemma \ref{extending residue field 2} there exists a suitable unique extension of $f$ with domain $\langle \VF_\A\langle \iota(\alpha)\rangle,\Gamma_{\VF_\A},k_{\VF_\A}\langle \alpha \rangle \rangle$ sending $\alpha $ to $\beta$, that is elementary in $\Gamma$ and $k$.

\item[Step 3:] We may achieve $\Gamma_{\VF_\A} =\Gamma_\K$.

Let $\gamma \in \Gamma_\K \setminus \Gamma_{\VF_\A}$.
Find a $\gamma'\in\Gamma_\Kl$ satisfying the same $\mathcal{L}_{\Gamma,\sigma}$-type as $\gamma $ over the image of $\Gamma_\A$ under $f_\Gamma$. Then we get an elementary $\MODAG$-isomorphism  $\Gamma_{\VF_\A}\langle \gamma \rangle \to f_\Gamma(\Gamma_{\VF_\A})\langle \gamma' \rangle $. By Lemma \ref{extending value group} we can find 
an extension  $f':\A\langle s(\gamma) \rangle \to \A\langle s(\gamma') \rangle \subseteq \Kl$ of $f$ over $\A$ that is sending $s(\gamma)$ to $s(\gamma')$ and is elementary in $\Gamma$ and $k$.

\end{enumerate}
From this point onwards we can conclude the proof with the quantifier elimination result by Pal without the lift of the residue field \cite[Theorem 11.8]{pal2012multiplicative}.
\end{proof}
\end{thm}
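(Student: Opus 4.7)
The plan is a back-and-forth style embedding extension argument, in direct analogy with the proof of Theorem \ref{qe for lift section pas}, with the two lemmas \ref{extending residue field 2} and \ref{extending value group} playing the role that Propositions \ref{section embedding for residue field} and \ref{section embedding for valuegroup} played there. Concretely, I would take a countable model $\K$ of the theory, an $\aleph_1$-saturated model $\K'$, a substructure $\A \subseteq \K$, and an $\lan_{\Gamma,k,\sigma,\iota,s}$-embedding $f:\A \to \K'$ whose $\Gamma$- and $k$-components are elementary, and extend $f$ to an embedding $\tilde f : \K \to \K'$ of the same kind.

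The first preparatory step is to close $\A$ under the field operations of $\VF_\A$ and $k_\A$ (passing to fraction fields), under the group operations on $\Gamma_\A$, and under $\sigma^{-1}$, so that $\VF_\A$ becomes an inversive difference subfield of $K$, $k_\A$ a difference subfield of $k_\K$, and $\Gamma_\A$ an ordered difference subgroup of $\Gamma_\K$. The maps $v$, $Res$, $\iota$, $s$ are all compatible with fractions, and $\sigma$-equivariance of $\iota$ and $s$ ensures that closing under $\sigma^{-1}$ does not disturb them, so $f$ extends uniquely to this closure and remains elementary in $\Gamma$ and $k$.

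Next, I would fully exhaust the residue field: given $\alpha \in k_\K \setminus k_{\VF_\A}$, use $\aleph_1$-saturation of $\K'$ to find $\beta \in k_{\K'}$ realizing the pushforward under $f_k$ of the $\lan_{ring,\sigma}$-type of $\alpha$ over $k_{\VF_\A}$. This yields a difference field isomorphism $k_{\VF_\A}\langle \alpha \rangle \to f_k(k_{\VF_\A})\langle \beta \rangle$, and Lemma \ref{extending residue field 2} lifts it to an embedding of the full $\lan_{\Gamma,k,\sigma,\iota,s}$-structures generated by $\iota(\alpha)$ and $\iota(\beta)$, without altering the value group and preserving elementarity on $\Gamma$ and $k$. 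Iterating, I may assume $k_{\VF_\A} = k_\K$. An entirely analogous loop using $\MODAG$-types in place of $\lan_{ring,\sigma}$-types and Lemma \ref{extending value group} in place of Lemma \ref{extending residue field 2} achieves $\Gamma_{\VF_\A} = \Gamma_\K$, without altering the residue field.

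At this stage the lift $\iota$ is defined on all of $k_\K$ inside $\A$ and the section $s$ on all of $\Gamma_\K$ inside $\A$, so any further extension of $f$ across $\K$ has no choice but to respect $\iota$ and $s$. Pal's $\VF$-quantifier elimination for $\sigma$-henselian multiplicative valued difference fields with section, \cite[Theorem 11.8]{pal2012multiplicative}, applies to the current $f$ and produces the desired $\tilde f$. The potential obstacle — maintaining $\sigma$-equivariance of the new lift and section while keeping $\Gamma$ (respectively $k$) fixed — is exactly what was isolated and resolved in Lemmas \ref{extending residue field 2} and \ref{extending value group}, so the proof reduces cleanly to assembling those ingredients in the correct order.
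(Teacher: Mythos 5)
Your proposal follows exactly the same route as the paper: the same back-and-forth setup (countable model, $\aleph_1$-saturated target, $\Gamma$- and $k$-elementary embedding), the same preparatory closure step, the same residue-field and value-group exhaustion loops via Lemmas \ref{extending residue field 2} and \ref{extending value group}, and the same final appeal to Pal's Theorem 11.8. The one small addition — explicitly noting that once $\iota$ and $s$ are total on $k_\K$ and $\Gamma_\K$ any further extension automatically respects them — is a welcome clarification of why Pal's section-only result suffices.
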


\begin{rem}
Just as in the setting of Pal in  \cite{pal2012multiplicative} this proof yields an Ax-Kochen-Ershov-Principle and the decidability of the theory $T_{\sigma,\iota,s}$ if residue field and value group are decidable. 
\end{rem}
Furthermore, in both our result above  and \cite[Theorem 11.8]{pal2012multiplicative}  it  suffices  to work with an angular component map instead of a section in order to achieve quantifier elimination, hence we state:
\begin{thm}
The theory of $\sigma$-henselian multiplicative valued difference fields with $\sigma$-equivariant lift and angular component eliminates ${\VF}$-quantifiers in the language $\mathcal{L}_{\Gamma, k, \sigma, \iota, ac}$.
\end{thm}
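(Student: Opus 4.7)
The plan is to reduce to the preceding theorem with $\sigma$-equivariant lift and section by passing to saturated extensions in which one can find sections whose induced angular components agree with those of the given structures. Recall from Definition \ref{def ac map} that any section $s$ of the value group defines an ac-map via $ac(x) = Res(x, s(v(x)))$; the work is in constructing a section compatible with a prescribed ac at the cost of working in a saturated extension.

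Given models $\K$ and $\K'$ of the theory in $\mathcal{L}_{\Gamma, k, \sigma, \iota, ac}$, a substructure $\A \subseteq \K$, and a partial embedding $f \colon \A \to \K'$ that is elementary on $\Gamma$ and $k$, I would pass to sufficiently saturated elementary extensions $\K^* \succeq \K$ and $\K'^* \succeq \K'$, which admit sections. Using saturation, I would construct $\sigma$-equivariant sections $s^*$ on $\Gamma_{\K^*}$ and $s'^*$ on $\Gamma_{\K'^*}$ whose induced ac-maps coincide with the given angular components on the original structures. Such a section is built by transfinite recursion along $\sigma$-orbits in the value group: at each stage one chooses an element of prescribed valuation and angular component $1$, with the group-homomorphism and $\sigma$-equivariance constraints solved by saturation. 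The existence at each stage of elements with prescribed valuation $\gamma$ and $ac = 1$ is guaranteed, since given any $x$ with $v(x) = \gamma$ the element $x/\iota(ac(x))$ has the correct valuation and $ac$ equal to $1$. Moreover, by matching the choices on $\Gamma_\A$, I would arrange that $s^*(\gamma)$ and $s'^*(f_\Gamma(\gamma))$ are $f$-compatible for all $\gamma \in \Gamma_\A$, which is possible because $f$ preserves both valuation and angular component.

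This furnishes a canonical extension of $f$ to an $\mathcal{L}_{\Gamma, k, \sigma, \iota, s}$-embedding $\tilde f \colon \tilde \A \to \K'^*$, where $\tilde \A := \A\langle s^*(\gamma) : \gamma \in \Gamma_\A \rangle \subseteq \K^*$, still elementary on $\Gamma$ and $k$. Applying the previously established quantifier elimination with lift and section extends $\tilde f$ to an embedding $\K^* \to \K'^*$, whose restriction to $\K$ is the desired extension of $f$ in $\mathcal{L}_{\Gamma, k, \sigma, \iota, ac}$. The main obstacle is the construction of the compatible section in the saturated extension: one must achieve group-homomorphism, $\sigma$-equivariance, and ac-compatibility simultaneously, which requires a careful orbit-by-orbit recursion respecting previously chosen values; once this lemma is in place the rest of the argument is a direct reduction to the section version of the theorem.
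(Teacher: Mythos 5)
Your strategy is a genuine departure from the paper's: the paper obtains this theorem by re-running the same back-and-forth argument with $ac$ in place of $s$ (as Pal does in the source theorem it cites, \cite[Theorem 11.8]{pal2012multiplicative}), whereas you attempt a formal reduction of the $ac$-version to the already-proven section version. The reduction direction you want is the nontrivial one: it is automatic that $\mathcal{L}_{\Gamma,k,\sigma,\iota,ac}$-QE implies $\mathcal{L}_{\Gamma,k,\sigma,\iota,s}$-QE (since $ac$ is quantifier-free definable from $s$), but going the other way is not, and the crux of your argument glosses over exactly the point where the difficulty sits.

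Concretely, the gap is the step where you claim to ``arrange that $s^*(\gamma)$ and $s'^*(f_\Gamma(\gamma))$ are $f$-compatible for all $\gamma \in \Gamma_\A$, which is possible because $f$ preserves both valuation and angular component.'' Preservation of $v$ and $ac$ is far from enough. If $x\in\A$ has $v(x)=\gamma$ and $ac(x)=\alpha$, then any candidate $s^*(\gamma)$ has the form $x\cdot u$ for a unit $u$ with $\pi(u)=\alpha^{-1}$, and the $\mathcal{L}_{\Gamma,k,\sigma,\iota}$-isomorphism type of $\A\langle s^*(\gamma)\rangle$ over $\A$ depends on the full quantifier-free $\sigma$-difference type of $u$ over $\A$ (algebraic relations, pseudo-limit behaviour of the $\sigma$-orbit, what happens to the value group and residue field of the extension), none of which is pinned down by $v$ and $ac$ alone. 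To make the construction go through you would need a lemma of the following shape: in sufficiently saturated models one can choose $s^*(\gamma)$ (respecting $\sigma$-equivariance and compatibility with $ac$) so that the resulting extension is ``generic'' in a sense that makes its isomorphism type over $\A$ uniquely determined, uniformly across both sides. Establishing such a lemma, and then iterating it $\sigma$-orbit by $\sigma$-orbit while also closing $\tilde\A$ under $s^*$ (noting that $\Gamma_{\tilde\A}$ may strictly contain $\Gamma_\A$, so you must re-close), amounts to essentially the same amount of valuation-theoretic work as running the back-and-forth directly with $ac$. As written, the proposal asserts the existence of the compatible sections without proving it, and that assertion is precisely where the content of the theorem lives.

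Secondarily, you should also flag that the existence of a $\sigma$-equivariant section compatible with a given $ac$ in a saturated model is itself a lemma requiring proof (the cited Cherlin fact gives a section, not a $\sigma$-equivariant one nor one compatible with a prescribed $ac$); your sketch of an orbit-by-orbit recursion using $x/\iota(ac(x))$ is the right idea, but it needs to be carried out, and in particular one has to handle orbits $\{\sigma_\Gamma^n(\gamma)\}_n$ that are not $\mathbb{Z}$-linearly independent from the previously handled part of $\Gamma$.
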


In addition, we also obtain similarly to \cite{pal2012multiplicative}:
\begin{cor}
The model companion of multiplicative valued difference fields in equicharacteristic 0 with a $\sigma$-equivariant section and a $\sigma$-equivariant lift is the theory of $\sigma$-henselian multiplicative valued difference fields  with lift and section, where the value group is a div-$\MODAG$ and the residue field is a model of $\text{ACFA}$. 
\end{cor}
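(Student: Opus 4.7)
The plan is to prove the corollary in two parts: first that the proposed theory $T^*$ (namely, $\sigma$-henselian multiplicative valued difference fields with $\sigma$-equivariant lift and section, div-$\MODAG$ value group, and ACFA residue field) is model complete, and second that every model of the base theory $T$ embeds into a model of $T^*$. Together these identify $T^*$ as the model companion of $T$.

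For model completeness of $T^*$, I would invoke the $\VF$-quantifier elimination theorem proved just above. Modulo $T^*$, any $\mathcal{L}_{\Gamma,k,\sigma,\iota,s}$-formula reduces to a Boolean combination of quantifier-free $\VF$-atomic formulas and formulas in the $\Gamma$- and $k$-sorts alone. Using that $\text{div-}\MODAG_\rho$ has quantifier elimination and that ACFA is model complete, the $\Gamma$- and $k$-parts are equivalent modulo $T^*$ to existential formulas in their respective sorts; collecting all existential quantifiers together yields an existential formula equivalent to the original, so $T^*$ is model complete.

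For the embedding part, start with $\K \models T$ with value-group parameter $\rho$. Embed the difference field $k_\K$ into a model $k^* \models \text{ACFA}$ and the ordered difference group $\Gamma_\K$ into a $\Gamma^* \models \text{div-}\MODAG_\rho$ (with the same $\rho$), using the fact that ACFA is the model companion of difference fields and $\text{div-}\MODAG_\rho$ the model companion of $\MODAG_\rho$. Iterating Lemma \ref{extending residue field 2} over a well-ordered enumeration of $k^* \setminus k_\K$ and Lemma \ref{extending value group} over $\Gamma^* \setminus \Gamma_\K$ produces an extension $\K_0 \supseteq \K$ of multiplicative valued difference fields with $\sigma$-equivariant lift and section such that $k_{\K_0} = k^*$ and $\Gamma_{\K_0} = \Gamma^*$. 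Then embed $\K_0$ into a maximal immediate extension $\K_1$ (equivalently, into the Hahn series $k^*((t^{\Gamma^*}))$ described in the Remark), which is pseudo-complete, still carries a $\sigma$-equivariant lift and section, and has residue field $k^*$ and value group $\Gamma^*$. Since $k^*$ is linear difference closed (being a model of ACFA), Fact \ref{fact:ps+ldc=sh} yields that $\K_1$ is $\sigma$-henselian; hence $\K_1 \models T^*$.

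The main obstacle is the construction of $\K_0$: one must interleave transcendental and algebraic residue-field extensions with corresponding extensions of $\K$, and similarly for the value group, while maintaining $\sigma$-equivariance of the lift and section throughout. The two extension lemmas make each individual step routine by giving a unique compatible extension, and crucially residue-field extensions via Lemma \ref{extending residue field 2} do not alter the value group and value-group extensions via Lemma \ref{extending value group} do not alter the residue field, so the two transfinite inductions can be carried out independently. The pseudo-completion step does not disturb $k^*$ or $\Gamma^*$ since it is immediate, which completes the argument.
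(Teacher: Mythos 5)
The paper gives no proof for this corollary---it only says ``similarly to \cite{pal2012multiplicative}''---so you are filling in the details, and your outline is the natural one: model completeness via the relative quantifier elimination just proved (plus QE for $\text{div-}\MODAG_\rho$ and model completeness of ACFA, using stable embeddedness and orthogonality to decompose formulas), and embeddability via the Hahn series model from the Remark together with Fact \ref{fact:ps+ldc=sh}.

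One point you should be more careful about. Lemmas \ref{extending residue field 2} and \ref{extending value group} are \emph{uniqueness} (isomorphism-extension) lemmas: they presuppose that both $\K$ and $\K'$ are given with $\A$ a common substructure and that the elements $\alpha\in k_\K$, $\beta\in k_{\K'}$ (resp.\ $\gamma,\gamma'$) already live inside these ambient models, and then conclude that the two generated extensions of $\A$ are isomorphic. They do not by themselves \emph{produce} an extension of $\K$ with a prescribed larger residue field or value group, so the phrase ``iterating Lemma \ref{extending residue field 2} $\ldots$ produces an extension $\K_0\supseteq\K$'' is not literally licensed by those statements. To make your second half precise you should first construct the extension---adjoin $\iota(\alpha)$ as a fresh transcendental (Gauss valuation) or as a root of the lifted minimal polynomial, set $\sigma(\iota(\alpha))=\iota(\bar\sigma(\alpha))$, and similarly for $s(\gamma)$---and only then invoke the lemmas to see that this extension is the unique one compatible with the data and hence that the lift, section, and $\sigma$-structure extend coherently. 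Equivalently, you can bypass the transfinite induction entirely: observe that the substructure $\A$ generated by $\iota(k_\K)$ and $s(\Gamma_\K)$ is the (fraction field of the) twisted group algebra and that $\K/\A$ is immediate, embed $\A$ into $k^*((t^{\Gamma^*}))$ in the obvious way, and then use uniqueness of $\sigma$-equivariant maximal immediate extensions (in equicharacteristic $0$, Kaplansky's hypothesis holds; the $\sigma$-equivariant version is in Pal) to extend the embedding of $\A$ to all of $\K$. Either route closes the gap; as written, the appeal to the two lemmas is doing work they were not formulated to do.
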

To apply the $\ntp_2$-transfer theorem in the next section we need the following. 
\begin{prop}
In a $\sigma$-henselian multiplicative valued difference fields with $\sigma$-equivariant lift and section
the residue field and value group are purely stably embedded and orthogonal. 
\begin{proof}
The proof follows exactly as for Proposition \ref{stab embedded} with a few minor substitutions. The residue field extension from Proposition \ref{section embedding for residue field} needs to be replaced with its difference counterpart in Lemma \ref{extending residue field 2} and similarly for the value group with \ref{section embedding for valuegroup} and Lemma \ref{extending value group}. 
\end{proof}
\end{prop}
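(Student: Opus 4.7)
The plan is to mimic the proof of Proposition \ref{stab embedded} verbatim, replacing each invocation of Proposition \ref{section embedding for residue field} with Lemma \ref{extending residue field 2}, and each invocation of Proposition \ref{section embedding for valuegroup} with Lemma \ref{extending value group}. The only substantive change is that "field type" is replaced everywhere by "difference field type" in the language $\lann_{ring,\sigma}$ or $\lann_{\Gamma,\sigma}$, and the generated substructures are taken to be closed under $\sigma$ and $\sigma^{-1}$.

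For stable embeddedness, I would take $\cM \prec \cN$ models of our theory and tuples $\boldsymbol\gamma, \boldsymbol\delta \in \Gamma_\cN^n$ with the same $\lann_{\Gamma,\sigma}$-type over $\Gamma_\cM$. Iterating Lemma \ref{extending value group} coordinate-by-coordinate (closing under $\sigma^{\pm 1}$ at each step, which keeps $k_\cM$ fixed) yields an isomorphism of the generated substructures
$\langle M(s(\boldsymbol\gamma)), \Gamma_\cM\langle \boldsymbol\gamma\rangle, k_\cM\rangle \iso \langle M(s(\boldsymbol\delta)), \Gamma_\cM\langle \boldsymbol\delta\rangle, k_\cM\rangle$
over $\cM$. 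By the $\VF$-quantifier elimination theorem just proved, this isomorphism is already elementary, and so $\boldsymbol\gamma$ and $\boldsymbol\delta$ realize the same type over $\cM$ in $\cN$. The analogous argument for residue-field tuples uses Lemma \ref{extending residue field 2}.

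For purity, given $\boldsymbol\gamma \equiv_{\lann_{\Gamma,\sigma}} \boldsymbol\delta$, I would start from the prime difference subfield of $K$ (the fixed field $\mathbb{Q}$ with trivial $\sigma$) and form the $\lann_{\Gamma,k,\sigma,\iota,s}$-substructures generated by $\boldsymbol\gamma$ and $\boldsymbol\delta$ respectively; iterating Lemma \ref{extending value group} again extends the $\MODAG$-isomorphism $\mathbb{Z}[\sigma]\cdot\boldsymbol\gamma \to \mathbb{Z}[\sigma]\cdot\boldsymbol\delta$ to one of the whole structure, which by $\VF$-QE is elementary. The residue-field case is symmetric.

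For orthogonality I would take $\boldsymbol\alpha \in k^n$, $\boldsymbol\gamma \in \Gamma^m$ and realizations $\boldsymbol\beta \equiv \boldsymbol\alpha$, $\boldsymbol\delta \equiv \boldsymbol\gamma$, then build the two generated $\lann_{\Gamma,k,\sigma,\iota,s}$-substructures over $\mathbb{Q}$ obtained by first applying Lemma \ref{extending residue field 2} successively to the $\iota(\alpha_i)$ (which does not alter the value group) and then Lemma \ref{extending value group} successively to the $s(\gamma_j)$ (which does not alter the residue field); these two construction sequences commute in the sense that the resulting structures are well-defined and pairwise isomorphic. The isomorphism is elementary by $\VF$-QE, which gives $\mathrm{tp}(\boldsymbol\alpha)\cup\mathrm{tp}(\boldsymbol\gamma) \vdash \mathrm{tp}(\boldsymbol\alpha,\boldsymbol\gamma)$. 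The main technical point to keep straight is that in the difference setting each extension step must be closed under $\sigma^{\pm 1}$ before the next step is applied; the $\sigma$-equivariance of $\iota$ and $s$ ensures this closure does not enlarge $\Gamma$ or $k$ respectively, so the alternating construction in the orthogonality step still decouples cleanly.
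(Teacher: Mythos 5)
Your proposal is correct and matches the paper's proof, which likewise proceeds by rerunning the argument of Proposition \ref{stab embedded} with Proposition \ref{section embedding for residue field} replaced by Lemma \ref{extending residue field 2} and Proposition \ref{section embedding for valuegroup} replaced by Lemma \ref{extending value group}, the only changes being that types and generated substructures are taken in the difference languages and the $\mathbb{Z}$-module generated by $\boldsymbol\gamma$ becomes the $\mathbb{Z}[\sigma]$-module. The extra care you take in noting that each extension step must be closed under $\sigma^{\pm 1}$ (which the $\sigma$-equivariance of $\iota$ and $s$ keeps from enlarging the other sort) is exactly the right technical point and is implicit in the paper's ``minor substitutions.''
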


\begin{prop}
Let $K$ be a $\sigma$-henselian multiplicative valued difference field with $\sigma$-equivariant lift and section and $a$ a singleton from the $\VF$-sort. 
If $K\langle a \rangle/K$ is an immediate extension, then every formula in $tp_{\lann_{\Gamma,k,\sigma,\iota,s}}(a/K)$ is determined by $qftp_{\lann_{\Gamma,k,\sigma}}(a/K)$, which is determined by instances of  $\NIP$ formulas.
\begin{proof}
By quantifier elimination the type $tp_{\lann_{\Gamma,k,\sigma,\iota,s}}(a/K)$ is implied by \linebreak $qftp_{\lann_{\Gamma,k,\sigma}}(a/K)$.

Suppose a formula $\phi \in qftp_{\lann_{\Gamma,k,\sigma}}(a/K)$ is $\text{IP}$.
Any formula in $qftp_{\lann_{\Gamma,k,\sigma}}(a/K) $ is equivalent to boolean combination of valuative conditions on $\sigma$-polynomials. 

 For a $\sigma$-polynomial $P(x)=F(\boldsymbol \sigma(x))$ with $F\in K[X_0,\dots,X_n]$ however we can replace any witness $b\in K$ with the tuple $\boldsymbol \sigma(b)$. Then we get an $\text{IP}$ formula $\psi \in qftp_{\lann_{\Gamma,k}}((x_0,\dots,x_n)/K)$. But all formulas in this type are instances of $\NIP$ formulas as $\ACVF$ is $\NIP$, a contradiction.
\end{proof}
\end{prop}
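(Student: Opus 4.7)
My plan proceeds in two stages, mirroring the two clauses of the conclusion: first reduce the full type to $qftp_{\lann_{\Gamma,k,\sigma}}(a/K)$, and then verify that the latter consists of instances of $\NIP$ formulas.

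For the first reduction, I would invoke the $\VF$-quantifier elimination theorem established above to conclude that $tp_{\lann_{\Gamma,k,\sigma,\iota,s}}(a/K)$ is already implied by $qftp_{\lann_{\Gamma,k,\sigma,\iota,s}}(a/K)$. To descend further to the $\iota, s$-free language, I would use the immediacy hypothesis: since $\Gamma_{K\langle a \rangle}=\Gamma_K$ and $k_{K\langle a\rangle}=k_K$, any term of the form $\iota(\tau(a))$ or $s(\tau(a))$ appearing inside a qf-formula evaluates to an element already in $K$ determined by the residue or valuation datum $\pi(\tau(a)) \in k_K$ respectively $v(\tau(a)) \in \Gamma_K$. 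These data are themselves recorded in $qftp_{\lann_{\Gamma,k,\sigma}}(a/K)$, so each atomic $\lann_{\Gamma,k,\sigma,\iota,s}$-formula in $x$ over $K$ is equivalent modulo this smaller qf-type to one purely in $\lann_{\Gamma,k,\sigma}$.

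For the $\NIP$ claim I would argue by contradiction in the spirit of the standard reduction from valued difference fields to $\ACVF$. Suppose $\phi(x;\bar c) \in qftp_{\lann_{\Gamma,k,\sigma}}(a/K)$ has IP. Every such $\phi$ is equivalent to a Boolean combination of valuative and residual conditions on $\sigma$-polynomials $F_i(x, \sigma(x),\dots,\sigma^{n_i}(x))$; bounding $n := \max n_i$ I introduce fresh variables $x_0,\dots,x_n$ and replace each occurrence of $\sigma^j(x)$ by $x_j$, obtaining a formula $\psi(x_0,\dots,x_n;\bar c)$ in $\lann_{\Gamma,k}$ such that $\phi(b;\bar c) \leftrightarrow \psi(b,\sigma(b),\dots,\sigma^n(b);\bar c)$ holds for every $b$. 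Any alternating configuration witnessing IP for $\phi$ with parameters $(\bar c_m)_m$ pushes forward under the substitution $b \mapsto (b,\sigma(b),\dots,\sigma^n(b))$ to an alternating configuration for $\psi$, so $\psi$ also has IP. But $\psi$ is an instance of an $\lann_{\Gamma,k}$-formula and $\ACVF$ is $\NIP$, yielding a contradiction.

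The main obstacle is the first reduction, since the function symbols $\iota$ and $s$ have $\VF$ as codomain, and it must be checked that nested compositions like $\iota(\pi(s(v(\tau(a)))))$ cannot produce any qf-content that is invisible to the smaller language. The immediacy hypothesis is precisely what ensures this: every intermediate value living in $k$ or $\Gamma$ lies in $k_K$ or $\Gamma_K$, so each $\iota$- or $s$-output is an element of $K$ whose value is already determined by the $\pi$- or $v$-datum of its input. Once this bookkeeping is in place, the $\NIP$ step is a straightforward pullback through the $\sigma$-free reduct.
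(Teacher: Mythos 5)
Your proof is correct and follows the same route as the paper's: reduce via quantifier elimination and immediacy to $qftp_{\lann_{\Gamma,k,\sigma}}(a/K)$, then trade $\sigma$-polynomial conditions in one variable for ordinary polynomial conditions in a tuple $(x_0,\dots,x_n)$ and invoke $\NIP$ of $\ACVF$. You are, if anything, more careful than the paper on the first step: the paper cites quantifier elimination to descend directly to $qftp_{\lann_{\Gamma,k,\sigma}}(a/K)$, whereas you correctly observe that quantifier elimination by itself only reaches $qftp_{\lann_{\Gamma,k,\sigma,\iota,s}}(a/K)$, and that eliminating the $\iota$- and $s$-terms requires the immediacy hypothesis (which forces their arguments into $k_K$ and $\Gamma_K$ and hence their outputs into $K$).
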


\section{$\NTP_2$ Transfer}

Jahnke and Simon have proven the following $\NIP$ transfer result, which we will adapt to $\ntp_2$.
\begin{defi}[\cite{jahnke2016nip}]
For a complete theory $T$ of valued fields possibly with additional structure, we define the following properties:
\begin{itemize}
\item[(SE):] The residue field and the value group are stably embedded.
\item[(IM):] If $K\models T$ and $a$ is a singleton in an extension of $K$ such that the generated structure $K\langle a\rangle/K$ is an immediate extension, then $tp(a/K)$ is implied by instances of $\NIP$ formulas. 
\end{itemize}  
\end{defi}
\begin{thm}[{\cite[Theorem 2.3]{jahnke2016nip}}]
Under the assumptions (SE) and (IM), a complete theory of valued fields $T$ is $\NIP$ if and only if the theories of the residue field and the value group are.
\label{jahnke simon NIP transfer}
\end{thm}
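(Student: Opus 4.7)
The forward direction is essentially free: the residue field and the value group are (interpretable as) reducts of $T$, and $\NIP$ is preserved under reducts, so if $T$ is $\NIP$ then so are both sorts.

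For the converse, assume both the residue field and the value group are $\NIP$, together with (SE) and (IM); I would argue by contradiction. Suppose $T$ has IP, witnessed by a formula $\phi(x;y)$. By compactness, fix a sufficiently saturated $M \models T$, an $M$-indiscernible sequence $(a_i)_{i<\omega}$, and a parameter $b$ such that $\phi(a_i;b)$ alternates. The plan is to analyze $\mathrm{tp}(b/M(a_i)_i)$ through a three-step decomposition of the extension $M\langle b, (a_i)_i\rangle / M\langle (a_i)_i\rangle$: first adjoining the residue-field part generated by $b$, then the value-group part, and finally an immediate extension.

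For the residue-field and value-group parts, assumption (SE) says that the induced types live purely in the respective sorts, so each of them is a type in an $\NIP$ structure and cannot support an alternation of arbitrary length uniformly in the parameters; in particular $\phi(x;y)$ restricted to witnesses in those sorts cannot have IP. For the final immediate piece, assumption (IM) gives that the type of the remaining generator is implied by instances of $\NIP$ formulas, so again no IP formula can alternate along an indiscernible sequence of such parameters. Combining these, using that the class of $\NIP$ formulas is closed under Boolean combinations and under substituting definable functions, one can piece together a contradiction to the alternation of $\phi(a_i;b)$.

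The main obstacle is the bookkeeping of the decomposition: one needs to choose a canonical intermediate structure between $M\langle (a_i)_i\rangle$ and $M\langle b,(a_i)_i\rangle$ in which the residue field and value group have already been enlarged to accommodate all elements contributed by $b$, so that the remaining step is genuinely immediate and (IM) applies. Once that canonical form is in place, Jahnke and Simon's argument proceeds by a standard indiscernibility extraction, replacing the original sequence by one along which each of the three pieces stabilises (using the $\NIP$ hypothesis on the residue field and value group sorts, and honest definitions coming from (IM) for the immediate step), thereby contradicting the alternation of $\phi$ on $(a_i)_i$.
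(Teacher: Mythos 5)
The paper does not prove this statement: it is quoted verbatim as \cite[Theorem~2.3]{jahnke2016nip} and used as a black box. The relevant comparison point in the paper is the proof of the $\NTP_2$ analogue, Theorem~\ref{ntp2 transfer}, which the paper explicitly says follows ``the strategy of Jahnke and Simon.'' Measured against that proof, your sketch has the right ingredients but the bookkeeping is not merely tedious---it is misaligned in a way that would sink the argument.

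In the paper's proof (and in Jahnke--Simon's original), the \emph{singleton} is the realization $a$ of the type, and the indiscernible sequence (or, for $\NTP_2$, the strongly indiscernible array) lives on the \emph{parameter} side; one enlarges each parameter tuple to enumerate a model $M_{ij}$, then looks at the extension $M_{00}\langle a\rangle / M_{00}$ and uses (SE) together with the Array Extension Lemma to push the residue-field and value-group content of that extension \emph{into the parameter models}, iterating $\omega$ times until the remaining extension $N_{00}\langle a\rangle / N_{00}$ is genuinely immediate and (IM) applies. Your sketch instead proposes to decompose $M\langle b,(a_i)_i\rangle / M\langle (a_i)_i\rangle$ and analyze $\mathrm{tp}(b/M(a_i)_i)$, i.e.\ to decompose the extension generated by the parameter $b$ over the sequence. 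That is the wrong side: $b$ is not assumed to be a singleton, so (IM) does not apply to it, and the three-step tower decomposition you describe is not available. You would first have to invoke that $\mathrm{IP}$ can be witnessed with a singleton realization on the $x$-side (equivalently, pass to the formula with the variable roles swapped), and then the decomposition must be performed on the extension generated by that singleton over a model built from the parameters.

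The second gap is that you treat the iterative absorption of residue-field and value-group data into the parameter side as ``bookkeeping'' to set up ``a canonical intermediate structure,'' but that iteration is the technical heart of the proof, not a formality. It is precisely here that (SE) is used---not to say that types ``live purely in the respective sorts and cannot support an alternation'' (which is a non sequitur as stated), but to guarantee that the parameter models can be modified, preserving indiscernibility over $a$, so that the residue field and value group of $N\langle a\rangle$ are already contained in $N$. Without that precise formulation, the appeal to ``$\NIP$ formulas are closed under Boolean combinations and under substituting definable functions'' does not give a contradiction. Your closing paragraph does gesture at the indiscernibility-extraction, which is the right instinct; making it into a proof requires exactly the Array-Extension/indiscernibility machinery that the paper cites from \cite{chernikov2014valued} in the $\NTP_2$ case and that Jahnke and Simon develop in the $\NIP$ case.
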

This already yields:
\begin{cor}
The theory of a henselian valued field of equicharacteristic 0 with lift and section is $\NIP$, if and only if the residue field is $\NIP$.
\end{cor}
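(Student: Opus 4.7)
The plan is to apply Theorem~\ref{jahnke simon NIP transfer} directly to the theory $T$ of henselian valued fields of equicharacteristic~$0$ with lift and section, in the language $\lann_{\Gamma,k,\iota,s}$. To invoke this transfer theorem I need to check its two hypotheses (SE) and (IM) for $T$, and then argue that among the two residual theories only the one on the residue field can obstruct $\NIP$.

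Both hypotheses are already available in the paper. Hypothesis (SE) follows from Proposition~\ref{stab embedded}, which gives the stronger statement that $\Gamma$ and $k$ are purely stably embedded and orthogonal in $\lann_{\Gamma,k,\iota,s}$. Hypothesis (IM) is provided by Fact~\ref{immediate is NIP section}: for an immediate singleton extension $M\langle b\rangle/M$, the full $\lann_{\Gamma,k,\iota,s}$-type of $b$ over $M$ is already implied by its $\lann_{\Gamma,k}$-quantifier-free type, and each such formula is an instance of an $\NIP$ formula since $\ACVF$ is $\NIP$. With both assumptions in place, Theorem~\ref{jahnke simon NIP transfer} yields that $T$ is $\NIP$ if and only if both the value group theory and the residue field theory are.

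To close the equivalence it remains to observe that the value group, viewed as an $\lann_\Gamma$-structure, is a plain ordered abelian group, and every ordered abelian group is $\NIP$ by the classical theorem of Gurevich--Schmitt. Hence the $\NIP$-status of $T$ reduces entirely to that of the residue field, giving the claimed biconditional. The only subtlety worth flagging is that the section $s:\Gamma\to K^\times$ could a priori induce extra definable structure on $\Gamma$ through its image in $K$; but purity in Proposition~\ref{stab embedded} rules out precisely this, so no hidden structure on $\Gamma$ can appear and no further work is needed.
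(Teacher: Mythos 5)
Your proposal is correct and follows essentially the same route the paper intends: the paper's proof of this corollary is simply the phrase ``This already yields,'' pointing directly at Theorem~\ref{jahnke simon NIP transfer}, with (SE) supplied by Proposition~\ref{stab embedded}, (IM) supplied by Fact~\ref{immediate is NIP section}, and the value group absorbed by the Gurevich--Schmitt theorem that ordered abelian groups are $\NIP$ (purity guaranteeing no extra induced structure on $\Gamma$). You have merely made explicit the ingredients the paper leaves implicit.
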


It has been remarked in \cite{jahnke2016nip} that adapting this to $\ntp_2$ using \cite{chernikov2014valued} is possible.
We adapt (IM) to work in the $\ntp_2$ context:
Again, for a complete theory $T$ of valued fields with possible additional structure, we define:
\begin{itemize}
\item[(IM'):] If $K\models T$ and $a$ is a singleton from an extension such that the generated structure $K\langle a\rangle /K$ is an immediate extension, then every formula in $tp(a/K)$ is equivalent to an instance of an $\text{NTP}_2$ formula. 
\end{itemize} 

For this definition to make sense, we should give a definition for $\ntp_2$.
\begin{defi}\label{def ntp2 and strong}
An inp-pattern is an array $\{\phi_i(x,y_i),(a_{ij})_{j<\omega},k_i\}_{i< \lambda}$ consisting of formulas $\phi_i (x,y_i)$, $a_{ij}$ tuples of parameters of length $|y_i|$ and $k_i\in \mathbb{N}$,  for some cardinal $\lambda$, which we call the depth, such that
\begin{itemize}
\item $\{ \phi_i(x,a_{ij})\}_{j<\omega}$ is $k_i$-inconsistent for every $i<\lambda$,
\item $\{\phi_i(x,a_{if(i)})\}_{i<\lambda}$ is consistent for every $f:\lambda \to \omega$.
\end{itemize}
The burden of a theory $T$ is the supremum of the depths of inp-patterns in all models of $T$.
$T$ is said to be strong if there is no inp-pattern of infinite depth in $T$.
A formula $\phi(x,y)$ is $\tp$ if there is an inp-pattern of the form \linebreak $\{\phi(x,y),(a_{ij})_{j<\omega},k_i\}_{i< \omega}$. Otherwise $\phi$ is called $\NTP_2$.
A theory is called $\NTP_2$ if no formula has $\tp$.

\end{defi}

\begin{prop}
The theory of a non-trivially valued field $\K$ with a lift is not strong.
\end{prop}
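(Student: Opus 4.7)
The idea is to use the lift $\iota$ together with some fixed $t \in K$ of positive valuation (which exists by non-triviality) to define, for each $n < \omega$, a formula that isolates the $n$-th ``$t$-adic coefficient'' of an element of $\VF$; making mutually independent choices of these coefficients will then yield an inp-pattern of infinite depth.

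Concretely, I propose the formula
\[
\phi_n(x, y) \;:=\; \exists z_0, \dots, z_{n-1} \in k \ \ v\bigl(x - \iota(z_0) - \iota(z_1) t - \dots - \iota(z_{n-1}) t^{n-1} - \iota(y) t^n\bigr) > n \, v(t),
\]
with $x$ in $\VF$ and $y$ in $k$. Informally, $\phi_n(x,y)$ says that $y$ can appear as the $t^n$-coefficient in some truncated $t$-adic expansion of $x$ of length $n{+}1$. Working in a sufficiently saturated model, I would pick countably many pairwise distinct $a_{n,j} \in k$ for each row $n$, and claim that $\{\phi_n,(a_{n,j})_{j<\omega},2\}_{n<\omega}$ is the desired pattern.

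To verify $2$-inconsistency of row $n$, I would subtract two witnessing decompositions for $\phi_n(x, a) \wedge \phi_n(x, b)$ with $a \neq b$, obtaining a sum $\sum_{i \leq n} \iota(d_i)\, t^i$ of valuation $> n v(t)$ with $d_n = b - a \in k^\times$. Since each non-zero summand $\iota(d_i) t^i$ sits at the distinct valuation $i v(t) \leq n v(t)$, no cancellations can push the total valuation above $n v(t)$, giving the required contradiction. Path consistency follows from finitely checking any initial segment using truncations $x_N := \sum_{m<N} \iota(a_{m,f(m)}) t^m$ (with obvious witnesses $z_i := a_{i,f(i)}$), since the residual $\sum_{m=n+1}^{N-1} \iota(a_{m,f(m)}) t^m$ has valuation $\geq (n{+}1) v(t) > n v(t)$; compactness and saturation then realize the whole path simultaneously.

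The main mild subtlety is guaranteeing countably many pairwise distinct residue-field parameters per row, which comes for free by passing to an $\aleph_1$-saturated model once we are in a completion with infinite residue field; I do not expect this to obstruct the core construction.
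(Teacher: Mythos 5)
Your proposal is correct and is essentially the paper's argument, merely phrased directly as an existential formula rather than via the sets $X_n$ and the (pro-definable) coefficient-extraction map $p_n : X_n \to k^{n+1}$ that the paper sets up; the $2$-inconsistency and path-consistency checks are exactly the ones the paper leaves implicit. One small note: as you observe, the construction needs $k$ infinite to supply $\omega$ pairwise distinct parameters per row, but passing to an $\aleph_1$-saturated model does not create new residue-field elements when $k$ is finite (finiteness is first-order); this hypothesis is also left implicit in the paper and is automatic in its equicharacteristic-$0$ setting, so it does not affect the comparison.
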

\begin{proof}
Let $t \in m_\K$ be non-zero and for $n< \omega$, let $$X_n=\{x\in K : \exists y_0,...,y_n\in k \text{ such that } v(x-\sum_{k=0}^n\iota(y_k)t^k)\geq (n+1)v(t)\}.$$
Then there is an obvious surjective map $f_n=(p_0,...,p_n):X_n\to k^{n+1}$. We have for any $n$, that
$X_{n+1}\subseteq X_n$ and that $f_n$ restricted to $X_{n+1}$ equals $f_{n+1}$ followed by the projection from $n+2$ coordinates on the
first $n+1$ coordinates. In this way, by setting $X=\bigcap_{n< \omega} X_n$ we get a surjective pro-definable map
$f=(p_n: n< \omega):X \to k^\omega$.

Now choose any array $(a_{ij})_{i,j< \omega}$ of pairwise distinct elements of the residue field $k$, and let
$\phi_n(x,y):=x\in X_n\wedge p_n(x)=y$. By construction, this is an inp-pattern, showing that $Th(\K)$ is not strong.
\end{proof}
Similarly, for the section case, we can show: 
\begin{prop}
The theory of a non-trivially valued field $\K$ with an infinite residue field and a section is not strong.
\end{prop}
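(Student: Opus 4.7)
The strategy mirrors the previous proposition, with the section $s$ taking over the role played there by the lift $\iota$ and the infinitude of $k$ supplying pairwise distinct parameters at each row. Fix $\gamma_0 > 0$ in $\Gamma_\K$ and set $t = s(\gamma_0) \in \mathfrak{m}_\K$. For $n < \omega$, consider the definable set
\[
X_n := \Bigl\{x \in K : \exists u_0,\dots,u_n \in \mathcal{O},\; v\Bigl(x - \sum_{k=0}^n u_k\, s(k\gamma_0)\Bigr) \geq (n+1)\gamma_0\Bigr\},
\]
and for $x \in X_n$ the projection $p_k(x) := \pi(u_k) \in k$ extracted from a witness. The plan is to verify that $p_k$ is well-defined and definable, so that $f_n = (p_0,\dots,p_n) \colon X_n \twoheadrightarrow k^{n+1}$ is a surjective definable map, and that the $X_n$ are nested with compatible projections. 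By $\aleph_1$-saturation this yields a pro-definable surjection $X := \bigcap_{n<\omega} X_n \twoheadrightarrow k^\omega$.

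Using infiniteness of $k$, pick pairwise distinct $(a_{ij})_{i,j<\omega}$ in $k$ and set $\phi_n(x, y) := x \in X_n \wedge p_n(x) = y$. Within each row $n$, different choices of $y$ give $2$-inconsistent formulas since $p_n$ is single-valued on its domain. Along any path $f \colon \omega \to \omega$, the conjunction $\{\phi_n(x, a_{n\,f(n)})\}_{n<\omega}$ is consistent because the target tuple $(a_{n\,f(n)})_{n < \omega} \in k^\omega$ is realized by some $x \in X$ via the pro-definable surjection. This exhibits an inp-pattern of depth $\omega$, so $\mathrm{Th}(\K)$ is not strong.

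The main obstacle, absent in the lift case, is precisely the well-definedness of $p_k$: there, $\iota(y_k) - \iota(y_k')$ is either $0$ or a valuation-zero unit, which keeps the lower-degree contributions rigidly separated and prevents digit-shifting; here $d_k := u_k - u_k'$ lies only in $\mathcal{O}$ and can carry arbitrary positive valuation, so in principle a digit at level $k$ could be absorbed into an adjacent level. My plan is to tighten the witness class by requiring $u_k \in \mathcal{O}^\times \cup \{0\}$ and argue by induction on $k$, using the strict growth $v(s(k\gamma_0)) = k\gamma_0$ and the ultrametric inequality applied to $v\bigl(\sum_j d_j\, s(j\gamma_0)\bigr) \geq (n+1)\gamma_0$ to force $v(d_k) > 0$, i.e.\ $\pi(u_k) = \pi(u_k')$ at each level. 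Establishing this induction cleanly is the one genuinely delicate step relative to the lift case; everything else — the surjectivity of $f_n$ onto $k^{n+1}$ via choice of residue lifts, the compatibility of projections under the inclusions $X_{n+1} \subseteq X_n$, and the pro-definable passage to $k^\omega$ — is bookkeeping of the same flavor as in the previous proposition.
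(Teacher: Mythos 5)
Your approach differs from the paper's in a fundamental way: you attempt to build the pro-definable surjection onto $k^\omega$ by extracting residue-field ``digits'' from a truncated $s$-adic expansion, whereas the paper builds a surjection onto $\Gamma^\omega_<$ (strictly increasing $\omega$-tuples in $\Gamma$) by iterating the leading-term stripping map $g(x) = x - s(v(x))$ and recording the sequence of valuations $v(g^{(i)}(x))$. That difference is not cosmetic: the paper's invariant is intrinsically determined by $x$, so there is nothing to check for well-definedness, and the parameter array is drawn from $\Gamma$ rather than from $k$.

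The step you flag as ``genuinely delicate'' is a genuine gap, and your proposed fix (restricting coefficients to $\mathcal{O}^\times \cup \{0\}$) does not close it. Without a lift there is no canonical coefficient at each level, and units can absorb higher-order terms. Concretely, take $\K = \mathbb{F}_p((t))$ with the standard section, $\gamma_0 = 1$ so that $s(k\gamma_0) = t^k$, and $x = 1 + t + t^2$. Then
\[
x = 1 + 1\cdot t + 1\cdot t^2 = (1+t) + 0\cdot t + 1\cdot t^2,
\]
and all coefficients on both sides lie in $\mathcal{O}^\times \cup \{0\}$, yet the two expansions give $p_1(x) = 1$ and $p_1(x) = 0$ respectively. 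Your proposed induction already breaks at $k = 1$: here $v(d_0) = v(-t) = \gamma_0 > 0$ as the base case asserts, but $d_1 = 1 - 0$ has valuation $0$, and the total difference $d_0 \cdot 1 + d_1\cdot t = -t + t$ still vanishes because $v(d_0) = \gamma_0 = v(d_1 t)$ coincide and cancel. More generally, once some $j < k$ satisfies $v(d_j) = (k-j)\gamma_0$, the ultrametric inequality no longer forces $v(d_k) > 0$; so $p_k$ is simply not definable from a section alone. The remaining bookkeeping in your sketch (surjectivity, compatibility of projections, passage to the inverse limit, and the $2$-inconsistency of rows) is fine \emph{conditional} on well-definedness, but coefficient extraction cannot be salvaged here. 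You need to switch to the paper's strategy of recording the strictly increasing chain of valuations produced by iterating $x \mapsto x - s(v(x))$.
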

\begin{proof}
Let $g(x)=x-s(v(x))$ and
 $$X_n=\{x\in K : \exists \gamma_0<...<\gamma_n\in \Gamma \text{ such that } v(\underbrace{g \circ \dots \circ g}_{i \text{ times}}(x))=  \gamma_i  \quad \forall i \leq n\}.$$
Then as above there is a surjective map $f_n=(p_0,...,p_n):X_n\to \Gamma^{n+1}_<$, where $\Gamma^{n+1}_<$ is the set of properly increasing $n+1$ tuples in $\Gamma$. We again have for any $n$, that  $X_{n+1}\subseteq X_n$ and that $f_n$ restricted to $X_{n+1}$ equals $f_{n+1}$ followed by the projection from $n+2$ coordinates on the
first $n+1$ coordinates. In this way, by setting $X=\bigcap_{n< \omega} X_n$ we get a surjective pro-definable map
$f=(p_n: n< \omega):X \to \Gamma^\omega_<$.

Now choose any array $(\gamma_{i,j})_{i,j< \omega}$ of pairwise distinct elements of value group $\Gamma$ such that $\gamma_{i,j_1}<\gamma_{i+1,j_2}$ for all $i,j_1,j_2<\omega$, and let
$\phi_n(x,y):=x\in X_n\wedge p_n(x)=y$. We have again constructed an inp-pattern of depth $\aleph_0$, showing that $Th(\K)$ is not strong.
\end{proof}

For our means, it is sufficient to look at singletons because of the following fact:
\begin{fact}[\cite{chernikov2014theories}]
If $T$ has $\tp$, then there is already some $\phi(x,y)$ with $|x|=1$ that has $\tp$.
\end{fact}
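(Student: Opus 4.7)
The plan is to reduce the length of the free-variable tuple by one and iterate. Assume toward contradiction that every $\tp$-witness uses at least two free variables, and choose $\phi(x,y)$ witnessing $\tp$ with $|x|=n \geq 2$ minimal. Fix an inp-pattern $\{\phi(x, a_{ij}), (a_{ij})_{j<\omega}, k\}_{i<\omega}$ realizing this, and write $x = x_1 x'$ with $|x_1|=1$ and $|x'|=n-1$. The strategy is to instantiate $x'$ with the corresponding coordinates of a single uniform path-witness and show that the resulting single-variable formula still supports an inp-pattern, contradicting minimality.

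First I would apply the standard Ramsey and Erd\H{o}s--Rado extraction to replace $(a_{ij})$ by a mutually indiscernible array in which each row $(a_{ij})_{j<\omega}$ is indiscernible over the union of the other rows, while preserving row $k$-inconsistency and consistency of every path. Then, using consistency of the constant path $f \equiv 0$, pick a realization $b = (b_1, b')$ of $\{\phi(x, a_{i,0})\}_{i<\omega}$. Define the single-variable formula
\[
\psi(x_1, y) \;:=\; \phi(x_1,\, b',\, y),
\]
absorbing $b'$ as parameters.

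Row $k$-inconsistency of $(a_{ij})$ for $\psi$ is immediate: any $k$ common realizations of $\psi(x_1, a_{ij})$ would lift, by appending $b'$, to $k$ common realizations of $\phi(x, a_{ij})$, contradicting the original pattern. Consistency of the constant path $f \equiv 0$ for $\psi$ is witnessed directly by $b_1$.

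The main obstacle, and the step I expect to be delicate, is path consistency of $\psi$ for arbitrary $f\colon\omega\to\omega$. The original pattern only gives a path-witness $(c_1, c')$ of $\{\phi(x, a_{i, f(i)})\}_i$, with a priori no connection between $c'$ and $b'$. This is exactly where mutual indiscernibility intervenes: the sequences $(a_{i, f(i)})_{i<\omega}$ and $(a_{i, 0})_{i<\omega}$ share a type over $\emptyset$, so an automorphism moves the latter to the former and transports $b$ to a witness $b^f$ of the desired path. The remaining task is to arrange, by a further automorphism argument or a refinement of the extraction (e.g.\ passing to an array indiscernible in a stronger sense), that $b^f$ can be chosen with last $n-1$ coordinates equal to $b'$. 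Once this is established, $\psi(x_1,y)$ carries a genuine inp-pattern with $|x_1|=1$, contradicting minimality of $n$ and forcing $n=1$ from the start.
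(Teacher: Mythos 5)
This Fact is cited from Chernikov's paper and is not proved in the present article, so the comparison must be against the known argument in the cited source, which is genuinely more delicate than your sketch suggests. The gap you flag at the end is not a technical loose end but the heart of the matter, and the patch you propose (a further automorphism argument or extracting a more indiscernible array) cannot close it. The obstruction: there is simply no reason that for every path $f$ a witness of $\{\phi(x,a_{i,f(i)})\}_i$ can be chosen with the same tail $b'$. Strong indiscernibility of the array tells you that all paths $(a_{i,f(i)})_i$ have the same type over $\emptyset$; it tells you nothing about their type over $b'$, which is exactly what you need to transport $b$ along the automorphism while fixing $b'$. Re-extracting an array that is mutually indiscernible over $b'$ is also not an option, because the extraction replaces the array by a new one (by Ramsey/Erd\H{o}s--Rado from the EM-type) and destroys the relation between the 0th column and the fixed witness $b$.

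Chernikov's actual proof proceeds by a dichotomy precisely at this point. One asks whether some co-small subset of rows of the array is mutually indiscernible over $b'$. If yes, path-consistency of $\psi(x_1,y)=\phi(x_1,b',y)$ follows by mutual indiscernibility over $b'$ (all paths have the same type over $b'$, and one of them is realized by $b_1$), and you get a one-variable $\tp$ witness on the surviving rows. If no, then the persistent failure of mutual indiscernibility over $b'$ is itself, via a burden computation, converted into an inp-pattern of infinite depth in a formula whose object variable is $x'$, contradicting minimality of $n$. Your proposal only carries out the first horn and has no mechanism for the second; without it, the argument does not go through. The decomposition $x=x_1x'$ and the attempt to plug in $b'$ are the right starting moves, but the missing case is where all the work lies.
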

As we are working with arrays instead of sequences, we have to introduce more terminology.
\begin{defi}
We say that an array $(a_{ij})_{i<\kappa,j<\omega}$ for some cardinal $\kappa$ is mutually indiscernible if for all $i$ the sequence $(a_{ij})_{j<\omega}$ is indiscernible over $\{a_{kj}: k\neq i, j < \omega\}$.

If additionally the sequence $((a_{ij})_{j<\omega})_{i<\kappa}$ is  indiscernible, we say $(a_{ij})_{i<\kappa,j<\omega}$ is a strongly indiscernible array.
\end{defi}

For our endeavour, we will need the Array Extension Lemma by Chernikov and Hils.
\begin{lem}[{\cite[Lemma 3.8]{chernikov2014valued}}]

\label{chernkiov hils 3.8}
Let $D$ be a stably embedded $\emptyset$-definable set and assume that the induced structure $D_{ind}$ on $D$ is $\text{NTP}_2$.
Further assume that $( (c_{ij})_{j<\omega})_{i < \omega}$ is indiscernible over $a$ and that $(c_{ij})_{i,j< \omega}$ is a strongly indiscernible array.

Let a small $b \subseteq D$ be given. 
Then there is a $(c_{ij}^*)_{i,j< \omega}$ and $(b_{ij}^*)_{i,j< \omega}$ such that:
\begin{enumerate}
\item $( b_{ij}^* c_{ij}^*)$ as a sequence in $i$ is indiscernible over $a$,
\item $( b_{ij}^* c_{ij}^*)_{j<\omega}$ are mutually indiscernible,
\item $ (c _{ij}^*)_{j<\omega} \equiv _{c_{i0}}  (c_{ij})_{j <\omega}$ for all $i < \omega$,
\item $abc_{00}\equiv ab^*_{00}c^*_{00}$
\end{enumerate}
In particular $(b_{ij}^*c_{ij}^*)_{i,j < \omega}$ is a strongly indiscernible array.
\end{lem}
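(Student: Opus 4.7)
The plan is to produce the augmented array $(b_{ij}^*, c_{ij}^*)$ in two stages: first construct raw candidates via row automorphisms together with stable embeddedness of $D$, and then apply a column-preserving array extraction inside $D_{\mathrm{ind}}$ using its $\mathrm{NTP}_2$ hypothesis to symmetrize the array without disturbing the anchor data. For the first stage I would exploit the strong indiscernibility of $(c_{ij})_{i,j<\omega}$: since the sequence of rows $((c_{ij})_{j<\omega})_{i<\omega}$ is indiscernible over $a$, for each $i<\omega$ fix an automorphism $\tau_i$ of the monster model fixing $a$ with $\tau_i((c_{0j})_j)=(c_{ij})_j$, taking $\tau_0=\mathrm{id}$. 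Within the zeroth row, mutual indiscernibility of $(c_{0j})_j$ over $a$ combined with stable embeddedness of $D$ lets me choose, for each $j<\omega$, an element $b^{0}_{0j}\in D$ with $b^{0}_{0j}\, c_{0j}\equiv_a b\, c_{00}$ and $b^{0}_{00}=b$. Setting $b^{0}_{ij}:=\tau_i(b^{0}_{0j})\in D$, each pair $(b^{0}_{ij}, c_{ij})$ realizes $\mathrm{tp}(bc_{00}/a)$.

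Next I would extract indiscernibility in two passes, working inside the stably embedded $D_{\mathrm{ind}}$. First, a row-wise extraction over the parameter set $a\cup\{c_{i0},\, b^{0}_{i0}:i<\omega\}$ produces an array $(b^{1}_{ij},c^{1}_{ij})$ whose rows are mutually indiscernible and whose column $0$ is untouched; this is a standard compactness-plus-Ramsey argument executed inside $D_{\mathrm{ind}}$. Then an $i$-direction extraction over $a$ arranges that the sequence of rows is itself indiscernible over $a$ while preserving the mutual indiscernibility from the previous pass. Composing with a suitable automorphism over $a$, one can further arrange $(b^{*}_{00}, c^{*}_{00})=(b,c_{00})$, delivering condition (4) exactly. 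Condition (3) is maintained because both extraction passes were performed over the column-$0$ data, so $(c^{*}_{ij})_j$ still has the same type as $(c_{ij})_j$ over $c_{i0}$; conditions (1) and (2) then follow by construction.

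The main obstacle is the compatibility of the two extractions: preserving strong indiscernibility of the $c$-part via pure Ramsey is routine, but keeping the $b$-part inside $D$ while not destroying the column-$0$ identifications is what requires the $\mathrm{NTP}_2$ hypothesis on $D_{\mathrm{ind}}$. This enters precisely in the first extraction pass, where $\mathrm{NTP}_2$ guarantees that a mutually indiscernible refinement of a strongly indiscernible array exists with prescribed column-$0$ data — a feature that fails in general theories and is the heart of the Chernikov–Kaplan extraction technology that underlies the lemma.
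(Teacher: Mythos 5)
This lemma is cited by the paper directly from Chernikov--Hils \cite[Lemma 3.8]{chernikov2014valued}; the paper itself gives no proof of it, so there is no internal argument to compare against. Evaluating your sketch on its own merits: you have the right high-level orientation (realize the type of $b$ inside $D$, then use an array-extraction step that exploits $\mathrm{NTP}_2$ of $D_{\mathrm{ind}}$), but the proposal leaves the load-bearing steps unproven, and several claims along the way are unjustified.

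Concretely: (i) You assert that the zeroth row $(c_{0j})_{j}$ is indiscernible over $a$, but the hypothesis only gives that the \emph{sequence of rows} is indiscernible over $a$ and that the array is strongly indiscernible over $\emptyset$; indiscernibility of an individual row over $a$ does not follow from this without an extra argument. (ii) You invoke stable embeddedness of $D$ to produce the candidates $b^{0}_{0j}$, but all that step actually uses is that $D$ is $\emptyset$-definable (automorphisms send $b\subseteq D$ back into $D$); stable embeddedness should instead be used to reduce the interaction between $b$ and the $c_{ij}$ (which are not in $D$!) to data inside $D$, and that reduction is missing. This also undermines the plan to run the extractions ``inside $D_{\mathrm{ind}}$'', since the $c$-part does not live in $D$. (iii) Most importantly, your two-pass Ramsey extraction as described uses no $\mathrm{NTP}_2$ at all — it would go through in any theory — yet the lemma is false without the $\mathrm{NTP}_2$ hypothesis. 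You correctly identify that the delicate point is preserving the column-$0$ identifications in conditions (3)--(4) while symmetrizing the array, and correctly attribute it to the Chernikov--Kaplan technology, but you only name this obstacle rather than resolving it. In the actual argument, $\mathrm{NTP}_2$ of $D_{\mathrm{ind}}$ is what guarantees that one can refine a mutually indiscernible array to one that remains mutually indiscernible over the new parameter $b\subseteq D$; without making that step explicit, the sketch does not establish the conclusion. As written, this is a reasonable plan of attack but not a proof.
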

Now that we have all the ingredients collected, we can prove using the strategy of Jahnke and Simon a more general version of \cite[Theorem 4.1]{chernikov2014valued}.
\begin{thm} \label{ntp2 transfer}
Under the assumptions (SE) and (IM'), the theory $T$ of a valued field (possibly with extra structure) is $\text{NTP}_2$ if and only if the full induced theories on the residue field and on the value group are $\NTP_2$.
\begin{proof}
Assume $T$ is has $\mathrm{TP}_2$. 
Then there is a formula $\phi(x,y)$ with $x$ being a single variable and a strongly indiscernible array $(a_{ij})_{i,j< \omega}$ witnessing $\tp$ together with a singleton $a\models \{ \phi (x,a_{i0})\}_{i < \omega}$ such that the sequence of rows $((a_{ij})_{j < \omega})_{i < \omega}$ is indiscernible over $a$.
We may increase each $a_{ij}$ so that it enumerates a model $M_{ij}$.
Then let $b$ and $c$ be enumerations of the residue field of the generated structure $M_{00}\langle a\rangle $ and of the value group of $M_{00}\langle a\rangle $ respectively.

Using the Array Extension Lemma \ref{chernkiov hils 3.8} twice, we can find $(a_{ij}^*)_{i,j < \omega}$, $(b_{ij}^*)_{i,j < \omega}$ and $(c_{ij}^*)_{i,j < \omega}$ such that
\begin{enumerate}
\item $( c_{ij}^*  b_{ij}^*  a_{ij}^*)$ is indiscernible in $i$ over $a$,
\item $( c_{ij}^* b_{ij}^* a_{ij}^*)$ are mutually indiscernible,
\item $ (a_{ij}^*)_{j<\omega}\equiv_{a_{i0}}  (a_{ij})_{j<\omega}$ for all $i <  \omega$,
\item $ac_{00}b_{00}a_{00}\equiv a c^*_{00} b^*_{00} a^*_{00}$.
\end{enumerate}
Then we can extend every tuple $c_{ij}^*b_{ij}^*a_{ij}^*$ to a model.

By iterating the array extension procedure $\omega$  times, we get a strongly indiscernible array of models 
$N_{ij}$ for all $i,j< \omega$, where $N_{ij}\langle a\rangle /N_{ij}$ is immediate for all $i,j< \omega$.
By construction, the array $(N_{ij})_{i,j < \omega}$ witnesses $\text{TP}_2$ in $\phi$.
Now by assumption (IM') every formula in the type $tp(a/N_{00})$ is equivalent to an instance of an $\text{NTP}_2$ formula. As none of these can be equivalent to our altered $\phi$, we have a contradiction.
\end{proof} 
\end{thm}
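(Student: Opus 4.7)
The plan is to adapt Jahnke--Simon's NIP transfer strategy, with the Array Extension Lemma of Chernikov--Hils (Lemma \ref{chernkiov hils 3.8}) replacing ordinary indiscernible sequence extraction so that we can work with the strongly indiscernible arrays used to witness $\tp$. The forward direction is easy: by (SE) the residue field and value group are stably embedded, so their induced theories are interpretable reducts of $T$, and $\NTP_2$ is preserved under interpretation.

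For the converse I would argue by contradiction. Assume both induced theories are $\NTP_2$ but $T$ has $\tp$. By the Chernikov fact cited just above, $\tp$ is witnessed by some $\phi(x,y)$ with $|x|=1$; fix a strongly indiscernible array $(a_{ij})_{i,j<\omega}$ and a singleton $a$ realising $\{\phi(x,a_{i0})\}_{i<\omega}$, arranged so that the row-sequence $((a_{ij})_{j<\omega})_{i<\omega}$ is indiscernible over $a$. Enlarge each $a_{ij}$ to enumerate a model $M_{ij}$.

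The crucial step is to replace this by a strongly indiscernible array of models $(N_{ij})_{i,j<\omega}$ with all the same properties, but satisfying in addition that $N_{00}\langle a\rangle/N_{00}$ is an immediate extension. Granted this, (IM') says every formula in $tp(a/N_{00})$ is equivalent to an instance of an $\NTP_2$ formula, contradicting the fact that $\phi(x,-)$ still witnesses $\tp$ via the columns of $(N_{ij})$. To produce such $(N_{ij})$, let $b$ enumerate the residue field of $M_{00}\langle a\rangle$ and $c$ its value group. Because $b$ is a small subset of the stably embedded residue field whose induced structure is $\NTP_2$ by hypothesis, the Array Extension Lemma absorbs $b$ into the array while preserving both strong indiscernibility and indiscernibility over $a$; the same device applied in the value-group sort absorbs $c$. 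Re-extending every enlarged cell to a model may again enlarge the residue field and value group of the generated $a$-extension, so the absorption must be iterated $\omega$ times; at the limit no new residue-field or value-group element can appear in $N_{00}\langle a\rangle$, which is exactly immediacy.

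The main obstacle will be orchestrating this iteration cleanly. One must maintain three invariants across $\omega$ stages simultaneously: that the array remains strongly indiscernible, that its rows remain indiscernible over $a$, and that $\phi$ continues to witness $\tp$ via the first column, while also showing that at the $\omega$-limit no residue-field or value-group element of $N_{00}\langle a\rangle$ escapes $N_{00}$. This is precisely where the $\NTP_2$ hypothesis on the induced theories is indispensable, since $\NTP_2$ of the induced structures on the residue field and value group is exactly the input required by the Array Extension Lemma at every stage. Once the immediate array is in hand, (IM') delivers the contradiction in a single step.
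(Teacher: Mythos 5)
Your proposal is correct and follows essentially the same strategy as the paper: reduce to a singleton witness of $\tp$, enlarge the array entries to models, absorb the residue-field and value-group parts of $M_{00}\langle a\rangle$ via the Chernikov--Hils Array Extension Lemma, iterate $\omega$ times to reach an array of models over which the $a$-extension is immediate, and conclude with (IM'). Your added remarks on the easy forward direction via (SE) and on the invariants to maintain through the iteration are sensible elaborations of the same argument rather than a different route.
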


This theorem can be applied to all the general $\NIP$ cases with regard to additional structure being $\ntp_2$:

\begin{cor}
Algebraically closed valued fields with extra structure on the residue field and value group, with lift and section, are $\ntp_2$ if and only if the structure of the value group and the residue field are $\ntp_2$.
\end{cor}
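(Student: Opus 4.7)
The plan is to apply the $\NTP_2$-transfer Theorem \ref{ntp2 transfer} to the theory $T$ of algebraically closed valued fields in equicharacteristic $0$ equipped with lift, section, and arbitrary additional structure on the sorts $k$ and $\Gamma$. This reduces the corollary to verifying hypotheses (SE) and (IM') for $T$.

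For (SE), I would adapt the argument of Proposition \ref{stab embedded} essentially verbatim. Since $\ACVF_0$ is in particular a henselian equicharacteristic $0$ theory, Theorem \ref{qe for lift section pas} gives $\VF$-quantifier elimination in $\lann_{\Gamma, k, \iota, s}$. Iterating Propositions \ref{section embedding for residue field} and \ref{section embedding for valuegroup}, for any two tuples from $k$ (respectively $\Gamma$) realizing the same type over a model, the generated $\lann_{\Gamma,k,\iota,s}$-substructures are isomorphic, and by $\VF$-QE this isomorphism is elementary; pure stable embeddedness and orthogonality follow. The extra structure on $k$ and $\Gamma$ is simply carried along by these isomorphisms, since it is internal to those sorts and QE is taken relative to them.

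For (IM'), let $K \models T$ and suppose $a$ is a singleton such that $K\langle a\rangle / K$ is immediate. Then by definition $\Gamma_{K\langle a\rangle}=\Gamma_K$ and $k_{K\langle a\rangle}=k_K$, so $\iota$ and $s$ produce no new elements of $K\langle a\rangle$ beyond those already present in $K$; likewise the additional structure on $k$ and $\Gamma$ contributes no new data, because the sorts themselves have not grown. Hence by $\VF$-QE the type $tp(a/K)$ in the full language is already implied by $qftp_{\lann_{\Gamma,k}}(a/K)$. Since $\ACVF$ is $\NIP$, each such quantifier-free formula is an instance of an $\NIP$ formula, and therefore an instance of an $\NTP_2$ formula, which is exactly what (IM') requires.

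The argument is mostly bookkeeping on top of Theorem \ref{ntp2 transfer}; the only conceptual point to watch is that the additional structure on $k$ and $\Gamma$ never interferes — for (SE) because quantifier elimination is relative to those sorts, and for (IM') because immediate extensions leave the sorts literally unchanged, so nothing of the extra structure can enter the picture.
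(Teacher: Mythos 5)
Your plan of reducing to Theorem \ref{ntp2 transfer} by verifying (SE) and (IM') for ACVF with lift, section and enriched $k$ and $\Gamma$ is exactly what the paper intends (it states this corollary without separate proof, relying on Proposition \ref{stab embedded} and Fact \ref{immediate is NIP section} as templates). Your verification matches the paper's implicit reasoning, including the observation that an immediate extension freezes both auxiliary sorts so that the extra structure contributes nothing new and the type is controlled by quantifier-free $\lann_{\Gamma,k}$-formulas, which are instances of $\NIP$ (hence $\NTP_2$) formulas since $\ACVF$ is $\NIP$.
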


\begin{cor}
Henselian valued fields in equicharacteristic $0$ with extra structure on the residue field and value group, with lift and section are $\ntp_2$ if and only if the structure of the value group and the residue field are $\ntp_2$.
\end{cor}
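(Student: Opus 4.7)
The plan is to apply Theorem \ref{ntp2 transfer}; it then suffices to verify the two hypotheses (SE) and (IM$'$) for henselian valued fields in equicharacteristic $0$ with lift and section, in the presence of extra structure on the sorts $\Gamma$ and $k$. The conceptual reason the argument should go through is that Theorem \ref{qe for lift section pas} is a relative $\VF$-quantifier elimination down to the sorts $\Gamma$ and $k$, so enriching those sorts does not interfere with the quantifier elimination itself.

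For (SE), I would rerun the proof of Proposition \ref{stab embedded}. That proof proceeds by iterated application of Propositions \ref{section embedding for residue field} and \ref{section embedding for valuegroup}, each of which leaves one of $\Gamma$ or $k$ pointwise unchanged while extending the $\VF$-structure. Consequently any additional relation or function symbols living on $\Gamma$ or $k$ are respected automatically, because the inputs to the construction already come equipped with matching $\Gamma$- or $k$-types in the enriched language. Relative quantifier elimination then upgrades the resulting partial isomorphism to an elementary one, yielding stable embeddedness, purity, and orthogonality of the two sorts just as before.

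For (IM$'$), I would combine Fact \ref{immediate is NIP section} with the standard observation that every $\NIP$ formula is $\NTP_2$. Given a singleton $a$ with $K\langle a\rangle / K$ immediate, neither $\Gamma_K$ nor $k_K$ grows, so atomic formulas involving the extra structure on these sorts evaluate only on parameters already named from $K$ and contribute nothing new to $tp(a/K)$. Hence the type is still implied by $qftp_{\lann_{\Gamma,k}}(a/K)$, which is a conjunction of valuative conditions on polynomials in $a$. Each such condition is an instance of an $\NIP$ formula because $\ACVF$ is $\NIP$, and $\NIP$ formulas are in particular $\NTP_2$.

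The step I expect to require the most care is the verification that the enrichment on $\Gamma$ and $k$ really adds no essential atomic information about a $\VF$-singleton in an immediate extension. I expect it to be routine since the new symbols live entirely off the $\VF$-sort, but one should explicitly check that the induced quantifier-elimination picture from Theorem \ref{qe for lift section pas} survives the enrichment. Once this is settled, Theorem \ref{ntp2 transfer} applies and gives the claimed equivalence.
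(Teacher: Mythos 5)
Your proposal is correct and follows the same route the paper intends: the corollary is an immediate consequence of Theorem \ref{ntp2 transfer} once (SE) is supplied by Proposition \ref{stab embedded} and (IM$'$) by Fact \ref{immediate is NIP section} together with the observation that boolean combinations of $\NIP$ (hence $\NTP_2$) instances remain $\NTP_2$. Your closing caveat about the enrichment on $\Gamma$ and $k$ is exactly the right thing to check, and it is indeed harmless since Theorem \ref{qe for lift section pas} is a relative quantifier elimination that leaves those sorts as parameter sorts.
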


\begin{cor}
The theory of a multiplicative $\sigma$-henselian valued difference fields with lift, section and extra structure on the residue field and value group is $\ntp_2$ if and only if the theory of its residue field and value group are.
\end{cor}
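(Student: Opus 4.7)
The plan is to reduce this to a direct application of Theorem \ref{ntp2 transfer}, using the two preceding propositions as the verifications of hypotheses (SE) and (IM'). Concretely, the theory in question is the theory of $\sigma$-henselian multiplicative valued difference fields with $\sigma$-equivariant lift and section, possibly enriched with predicates/functions on the $k$- and $\Gamma$-sorts; I will check that the additional structure does not spoil anything.

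First I would verify (SE). The proposition preceding the one on immediate extensions gives that in the pure theory (without extra structure on the imaginaries) the residue field and value group are purely stably embedded and orthogonal. Since the extra structure is by hypothesis concentrated on the $k$- and $\Gamma$-sorts, the same back-and-forth using Lemma \ref{extending residue field 2} and Lemma \ref{extending value group} goes through: extending by a residue-field element (via $\iota$) does not change $\Gamma$ and only adds to $k$ what was already internal to it, and dually for value group elements via $s$. Hence the stable embeddedness and orthogonality statements survive the enrichment.

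Next I would verify (IM'). The proposition immediately preceding shows that for $K\langle a\rangle/K$ an immediate extension with $a$ a VF-singleton, every formula in $\mathrm{tp}_{\lann_{\Gamma,k,\sigma,\iota,s}}(a/K)$ is implied by $\mathrm{qftp}_{\lann_{\Gamma,k,\sigma}}(a/K)$, and the latter consists of instances of $\NIP$ formulas coming from ACVF. Since every $\NIP$ formula is in particular $\NTP_2$, this yields (IM') as stated. Adding extra structure on $k$ and $\Gamma$ does not introduce new VF-formulas into $\mathrm{qftp}_{\lann_{\Gamma,k,\sigma}}(a/K)$ that are not already controlled by the ACVF-reduct, so the argument adapts unchanged.

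With (SE) and (IM') in hand, Theorem \ref{ntp2 transfer} immediately yields the equivalence: $T$ is $\NTP_2$ if and only if the induced theories of the residue field and the value group are $\NTP_2$. The only point of care, and in some sense the ``obstacle'', is to be sure that the enrichment does not destroy the two hypotheses; but since all our back-and-forth lemmata are of the form ``extensions over the leaving sort are unique once the data on the target sort are fixed,'' the extra predicates on $k$ and $\Gamma$ pass through transparently, so no genuine new work is required.
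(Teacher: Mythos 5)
Your proposal is correct and follows the same route the paper intends: apply Theorem \ref{ntp2 transfer} with hypotheses (SE) and (IM') supplied by the two preceding propositions, noting that NIP formulas are in particular $\NTP_2$ and that enrichment on the $k$- and $\Gamma$-sorts does not disturb the back-and-forth lemmas. The paper states this corollary without a written proof, and your spelled-out justification, including the remark about the enrichment passing through transparently, matches what the paper leaves implicit.
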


\end{document}